\documentclass[conference, 10pt, twocolumn]{ieeeconf}       % Use this line for a4
\bstctlcite{bstctl:etal, bstctl:nodash, bstctl:simpurl}
\def\BibTeX{{\rm B\kern-.05em{\sc i\kern-.025em b}\kern-.08em
    T\kern-.1667em\lower.7ex\hbox{E}\kern-.125emX}}
    
\usepackage[left=54pt, right=54pt,bottom=54pt, top=54pt]{geometry}
\usepackage{amsmath,mathrsfs,amsfonts,amssymb,graphicx,epsfig}
 \usepackage{amsthm}
\usepackage{subcaption}
\usepackage{color,multirow,rotating,tcolorbox}
\usepackage{algorithm,algpseudocode,algorithmicx}
\usepackage{cite,url,framed,bm,balance,nicematrix,physics}
\usepackage{stmaryrd,mathtools}

\newtheorem{theorem}{Theorem}

\newtheorem{lemma}{Lemma}
\newtheorem{proposition}{Proposition}
\newtheorem{remark}{Remark}

\newtheorem{mytheorem}{Theorem}[]      % Create new theorem called 'Theorem'.

\usepackage{tikz}
\usetikzlibrary{calc,trees,positioning,arrows,chains,shapes.geometric,decorations.pathreplacing,decorations.pathmorphing,shapes, matrix,shapes.symbols}

\usepackage[breaklinks]{hyperref}

\newcommand{\Jac}{{\mathrm{D}}}
\renewcommand{\det}{{\mathrm{det}}}

\usepackage[dvipsnames]{xcolor}
\usepackage{soul}
\sethlcolor{MidnightBlue!10}

\makeatletter
\newcommand{\pushright}[1]{\ifmeasuring@#1\else\omit\hfill$\displaystyle#1$\fi\ignorespaces}

\newcommand{\dotminus}{\mathbin{\text{\@dotminus}}}

\newcommand{\@dotminus}{%
  \ooalign{\hidewidth\raise1ex\hbox{.}\hidewidth\cr$\m@th-$\cr}%
}

\allowdisplaybreaks

\title{\LARGE\textbf{Linear Exponential Quadratic Gaussian Covariance Steering
}}

\author{Chiran B. Cherian, Yasemin Isik, Abhishek Halder% <-this % stops a space
\thanks{Chiran B. Cherian and Abhishek Halder are with the Department of Aerospace Engineering, Iowa State University, Ames, IA 50011, USA, {\tt\footnotesize{\{cbckbc,ahalder\}@iastate.edu}}. Yasemin Isik is with the Department of Mathematics, Iowa State University, Ames, IA 50011, USA, {\tt\footnotesize{yisik@iastate.edu}}.
}
\thanks{This research was partially supported by NSF awards 2111688, 2450377.}
}

\IEEEoverridecommandlockouts
\begin{document}
\bstctlcite{IEEEexample:BSTcontrol}
\maketitle
\thispagestyle{empty}
\pagestyle{empty}

%%%%%%%%%%%%%%%%%%%%%%%%%%%%%%%%%%%%%%%%%%%%%%%%%%%%%%%%%%%%%%%%%%%%%%%%%%%%%%%%
\begin{abstract}
We formulate and analyze the linear exponential quadratic Gaussian (LEQG) covariance steering problem in continuous time over a given deadline (finite time horizon). The solution for this problem can be seen as a risk-sensitive Schr\"{o}dinger bridge between Gaussian endpoints in the linear quadratic setting. Unlike the risk-neutral case, the LEQG covariance steering controller--still a linear state feedback--can no longer be written in closed form. We show that the optimal controller is parameterized by a symmetric matrix solving an algebraic equation that encodes the implicit dependence on the risk-sensitivity parameter. We explain how the structure of this optimal controller significantly generalizes the existing results for the risk-neutral case. Building on these results, for the matched noise and input channel case, we prove the existence-uniqueness of solution for the LEQG covariance steering problem in the neighborhood of the known risk-neutral optimal solution. We give an illustrative numerical example.
\end{abstract}

\section{Introduction}\label{sec:Intro}
For the standard integral quadratic cost
\begin{align}
C:=\!\int_{0}^{1}\!\frac{1}{2}\left( x_t^{\top}Q_t x_t + u_t^{\top}R_t u_t  \right)\differential t, \; Q_t \succeq 0, \; R_t \succ 0,
\label{defIQC}    
\end{align}
and fixed finite $\theta\in\mathbb{R}$, we consider the \emph{linear exponential quadratic Gaussian (LEQG) covariance steering problem}:
\begin{subequations}
\begin{align}
\inf_{\gamma\in \Gamma} \qquad&\frac{1}{\theta} \log \mathbb{E} \left[ \exp \left( \theta C \right) \right]\label{LEQGobjective}\\
\text{subject to}\quad &\differential x_t = \left(A_t x_t + B_t u_t\right)\differential t + F_{t}\,\differential w_{t},\label{LTVdynconstr2}\\
&x_0\sim \rho_0 := \mathcal{N}(0,\Sigma_0),
    \quad
    x_1\sim \rho_1:=\mathcal{N}(0,\Sigma_1),\label{EndpointConstr2}
\end{align}
\label{LEQGcovariancesteeringproblem}
\end{subequations}
in continuous time $t\in[0,1]$, where the state $x_t\in\mathbb{R}^{n}$, the control input $u_t\in\mathbb{R}^{m}$, and the standard Wiener process $w_t\in\mathbb{R}^{p}$. Problem \eqref{LEQGcovariancesteeringproblem} seeks to design a Markovian feedback controller $\gamma:[0,1]\times\mathbb{R}^{n}\mapsto\mathbb{R}^{m}$, i.e., $u_t = \gamma(t,x)$, such that the optimally controlled state is steered between the centered Gaussians $\rho_0,\rho_1$ with given covariances $\Sigma_0,\Sigma_1$ over the given time horizon $[0,1]$. 

The set of feasible control policies is $$\Gamma:=\{\gamma:[0,1]\times\mathbb{R}^{n}\mapsto\mathbb{R}^{m} \mid \mathbb{E}\int_{0}^{1}\|\gamma(t,x)\|_2^2\differential t < \infty\},$$
    where the expectation operator $\mathbb{E}\left[\cdot\right]$ is taken w.r.t. the controlled state under the chosen policy $\gamma$. For a given $\gamma\in\Gamma$, the $\mathbb{E}\left[\cdot\right]$ in \eqref{LEQGobjective} is w.r.t. the law of the controlled state. 
    
    There is no loss of generality in considering the endpoint Gaussians in \eqref{EndpointConstr2} to be centered because the solution for the nonzero-mean case is obtained by adding a time-varying drift to the optimal controller for problem \eqref{LEQGcovariancesteeringproblem}. This time-varying drift depends on the solution of problem \eqref{LEQGcovariancesteeringproblem} but not the other way around; see \cite[Remark 9]{chen2015optimalPartI} mutatis mutandis for $\theta\neq 0$. Throughout, we make the following assumptions.\\
\noindent\textbf{Assumptions.}
\begin{itemize}
    
    \item[A1.] The trajectory tuple $(A_t,B_t,F_t)$ is  bounded and continuous in $t\in[0,1]$, and the pair $(A_t,B_t)$ is uniformly controllable over the time horizon $[0,1]$.

    \item[A2.] The matricial trajectories $Q_t \succeq 0$ and $R_t\succ 0$ are bounded and continuous w.r.t. $t\in[0,1]$. 

    \item[A3.] The given covariance matrices $\Sigma_0,\Sigma_1\succ 0$.
\end{itemize}

Problem \eqref{LEQGcovariancesteeringproblem} can be seen as a risk-sensitive generalization of the \emph{risk-neutral linear quadratic Gaussian (LQG) covariance steering problem}--the latter differs from \eqref{LEQGcovariancesteeringproblem} only in that the objective then is replaced by the average cost $\mathbb{E}[C]$. The constant $\theta\in\mathbb{R}$ is called the \emph{risk-sensitivity parameter}. Notice that $\log \mathbb{E} \left[ \exp \left( \theta C \right) \right]$ is the logarithmic moment generation function (log-MGF) of the random variable $C$.

By L'Hôpital's rule, $\lim_{\theta \to 0} \frac{1}{\theta} \log \mathbb{E}[e^{\theta C}]  = \mathbb{E}[C]$, i.e., the limit $\theta \rightarrow 0$ corresponds to the \emph{risk-neutral} LQG objective. When $\theta > 0$, the LEQG objective becomes \emph{risk-averse}. When $\theta < 0$, the objective becomes \emph{risk-seeking}.

While the LQG problem minimizes the average cost $\mathbb{E}[C]$, the LEQG objective is such that the realizations of the (random) cost $C$ that are above average, are penalized more than the realizations of $C$ that are below average. So minimizing $\frac{1}{\theta} \log \mathbb{E}[e^{\theta C}]$ ensures that for $\theta>0$ large, a larger $C$ is more aggressively penalized. The motivation is that if the random variable $C$ has a large variance, then we prefer a more conservative controller that might be worse on average, but perform better in the worst case.

\noindent\textbf{Related Works.} In classical stochastic control, the solution of the LEQG problem \emph{with terminal cost} is well-known via dynamic programming \cite[Sec. VIII]{jacobson1973optimal}, via stochastic maximum principle \cite{whittle1990risk,lim2005new}, and via completion-of-square and Radon-Nikodym derivative \cite{duncan2013linear}. Notice, however, that problem \eqref{LEQGcovariancesteeringproblem} has no terminal cost and instead, has endpoint covariance constraints.

On the other hand, the \emph{LQG covariance steering} problem (i.e., with objective $\mathbb{E}[C]$) was first studied in \cite{chen2015optimalPartI,chen2015optimalPartII,chen2018optimal}. Specifically, the work in \cite{chen2015optimalPartI} derived the optimal controller for the case $F_t = B_t, Q_t \equiv 0$. The developments in \cite{chen2015optimalPartII} considered different input and noise coefficient matrices (i.e., $F_t\neq B_t$) and $Q_t \equiv 0$. The work in \cite{chen2018optimal} obtained the solution for $F_t = B_t$ and $Q_t \succeq 0$. The purpose of our work is to initiate a generalization of these results for the risk-sensitive objective $\frac{1}{\theta} \log \mathbb{E}[e^{\theta C}]$.

From a probabilistic viewpoint, problem \eqref{LEQGcovariancesteeringproblem} can be interpreted as a risk-sensitive variant of the Schr\"{o}dinger bridge problem in the linear quadratic setting \cite{chen2021stochastic,11239424}:
\begin{subequations}
\begin{align}
&\inf_{(\rho,\gamma)\in \mathcal{P}_{01}\times\Gamma} \quad\frac{1}{\theta} \log \mathbb{E} \left[ \exp \left( \theta C \right) \right]\label{RiskSensititiveLQSBobjective}\\
&\;\text{subject to}\quad\partial_t\rho + \nabla_{x}\cdot\left(\rho\left(A_t x + B_t\gamma\right)\right) = \frac{1}{2}\langle F_t F_t^{\top},\nabla_{x}^{2}\rho\rangle,\label{FPKlinear}
\end{align}
\label{RiskSensitiveSB}
\end{subequations}
where $\mathcal{P}_{01}$ is the collection of probability density function (PDF)-valued curves that are continuous in $t\in[0,1]$, and connect the given centered Gaussian endpoints $\rho_0,\rho_1$. In \eqref{FPKlinear}, the symbol $\langle\cdot,\cdot\rangle$ denotes the Frobenius inner product, and $\nabla_{x}\cdot, \nabla_{x}^2$ denote the Euclidean divergence and Hessian, respectively. In particular, \eqref{FPKlinear} is the Fokker-Planck or Kolmogorov's forward PDE describing the evolution of state PDF $\rho(t,x)$ under the controlled dynamics. In this letter, we will not develop this connection (i.e., the large deviation viewpoint). Instead, we will focus on the structure and computation of the optimal solution.

\noindent\textbf{Contributions.}
For the LEQG problem \eqref{LEQGcovariancesteeringproblem}, we

\begin{itemize}

\item deduce the optimal solution structure, and explain how this structure departs from the existing results known for the risk-neutral case (Sec. \ref{sec:MainResults}),

\item establish existence-uniqueness of the optimal solution in the neighborhood of the risk-neutral optimal solution (Theorem \ref{thm:LocalExistenceUniqueness} and Sec. \ref{sec:Proofs}),

\item implement a numerical procedure that builds on the structural results deduced here, and illustrate the solution via a noisy double integrator example (Sec. \ref{sec:Numerics}). 

\end{itemize}

%\noindent\textbf{Organization.}

\noindent\textbf{Notations and acronyms.} For natural number $n>1$, we use $\mathbb{S}^{n}$ to denote the set of $n\times n$ real symmetric matrices, and $\mathbb{S}^{n}_{++}$ to denote its subset comprising of positive definite matrices. The notation $\lambda_i$ denotes the $i$th eigenvalue, and $\otimes$ denotes the Kronecker product. We use the acronyms IVP and BVP for initial and boundary value problem, respectively. We abbreviate ``state transition matrix" as STM, and ``with respect to" as w.r.t.

%%%%%%%%%%%%%%%%%%%%%%%%%%%%%%%%%%

\section{Main Results}\label{sec:MainResults}
\subsection{Necessary Conditions for Optimality}\label{subsec:ConditionsForOptimality}
\begin{proposition}\label{prop:PiHODEBVP}
Consider problem \eqref{LEQGcovariancesteeringproblem} with fixed finite $\theta\in\mathbb{R}$ (equivalently problem \eqref{RiskSensitiveSB}) and assumptions A1-A3. The optimal controller is 
\begin{align}
\gamma_{\mathrm{opt}}(t,x) = -R_t^{-1}B_t^{\top}\Pi_{t\mid\theta}\: x,
\label{OptimalControl}    
\end{align}
where $\left(H_{t\mid\theta},\Pi_{t\mid\theta}\right)$ jointly solve\footnote{The subscript ${t\mid\theta}$ is meant to signify the parametric dependence of the time varying matrix on the given risk-sensitivity parameter $\theta\in\mathbb{R}$.} the coupled Riccati-Riccati matrix BVP:
\begin{subequations}
\begin{align}
-\dot{H}_{t\mid\theta} =& A^{\top}_t H_{t\mid\theta} + H_{t\mid\theta} A_t + H_{t\mid\theta} B_t R_t^{-1} B_t^{\top} H_{t\mid\theta} -Q_t\nonumber\\
&-\!\left(\Pi_{t\mid\theta} +H_{t\mid\theta}\right)\left(B_t R_t^{-1} B_t^{\top}-F_t F^{\top}_t\right)\left(\Pi_{t\mid\theta} +H_{t\mid\theta}\right)\nonumber\\
& - \colorbox{MidnightBlue!10}{$\theta\Pi_{t\mid\theta} F_t F_t^{\top} \Pi_{t\mid\theta}$},\label{Hdot}\\
-\dot{\Pi}_{t\mid\theta} =& A_t^{\top}\Pi_{t\mid\theta} + \Pi_{t\mid\theta}A_t + Q_t\nonumber\\
&- \Pi_{t\mid\theta}\left(B_t R_t^{-1}B_t^{\top}-\colorbox{MidnightBlue!10}{$\theta F_t F_t^{\top}$}\right)\Pi_{t\mid\theta}, \label{Pidot}\\
\Sigma_{0}^{-1} =& \Pi_{0\mid\theta} + H_{0\mid\theta}, \quad \Sigma_{1}^{-1} = \Pi_{1\mid\theta} + H_{1\mid\theta}.
\label{InitialAndTemrinalConditions}
\end{align}
\label{RiccatiRiccatiBVP}
\end{subequations}
\end{proposition}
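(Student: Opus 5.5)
The plan is to derive \eqref{RiccatiRiccatiBVP} in two stages. First, a risk-sensitive dynamic programming argument produces the feedback gain and the $\Pi$-equation \eqref{Pidot}. Second, a change of variables on the closed-loop covariance produces the $H$-equation \eqref{Hdot} together with the boundary conditions \eqref{InitialAndTemrinalConditions}.

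\textbf{Stage 1: the gain and \eqref{Pidot}.} Following the Schr\"{o}dinger-bridge viewpoint \eqref{RiskSensitiveSB}, the endpoint constraint $x_1\sim\rho_1$ is handled by a Lagrange multiplier. This multiplier acts as an unknown terminal function $\psi_1(x)$ added inside the exponent. For a fixed terminal function, the criterion $\frac{1}{\theta}\log\mathbb{E}\exp\big(\theta(C+\psi_1(x_1))\big)$ has a value function $S(t,x)$. Applying It\^{o}'s rule to $\exp(\theta S)$ shows that $S$ satisfies the risk-sensitive HJB equation
\begin{align*}
-\partial_t S = \min_{u}\Big[\tfrac12 x^\top Q_t x+\tfrac12 u^\top R_t u+\nabla S^\top(A_t x+B_t u)\Big]+\tfrac12\langle F_tF_t^\top,\nabla^2 S\rangle+\tfrac{\theta}{2}\nabla S^\top F_tF_t^\top\nabla S .
\end{align*}
The minimizer is $u=-R_t^{-1}B_t^\top\nabla S$. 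Because the endpoints are centered Gaussians and the dynamics are linear, I will take the quadratic ansatz $S=\tfrac12 x^\top\Pi_{t\mid\theta}x+c(t)$, with the multiplier $\psi_1$ also quadratic, $\psi_1=\tfrac12 x^\top\Pi_{1\mid\theta}x$ up to a constant. Matching the quadratic terms gives exactly \eqref{Pidot}, including the highlighted $\theta F_tF_t^\top$ term, and gives the controller \eqref{OptimalControl}. The terminal value $\Pi_{1\mid\theta}$ is left free; it is the multiplier to be fixed by the constraint.

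\textbf{Stage 2: the covariance and \eqref{Hdot}.} Under \eqref{OptimalControl} the state remains Gaussian and centered. Its covariance satisfies the Lyapunov equation
\begin{align*}
\dot\Sigma_t=(A_t-B_tR_t^{-1}B_t^\top\Pi_{t\mid\theta})\Sigma_t+\Sigma_t(A_t-B_tR_t^{-1}B_t^\top\Pi_{t\mid\theta})^\top+F_tF_t^\top ,
\end{align*}
with $\Sigma_t\succ0$ by A3. Define $H_{t\mid\theta}:=\Sigma_t^{-1}-\Pi_{t\mid\theta}$. Then $\frac{d}{dt}\Sigma_t^{-1}=-\Sigma_t^{-1}\dot\Sigma_t\Sigma_t^{-1}$. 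Substitute $\Sigma_t^{-1}=\Pi_{t\mid\theta}+H_{t\mid\theta}$ and subtract \eqref{Pidot}. The routine expansion should yield \eqref{Hdot}. The cross terms combine into $-(\Pi+H)(BR^{-1}B^\top-FF^\top)(\Pi+H)$, and the leftover $-\theta\Pi FF^\top\Pi$ is inherited from \eqref{Pidot}. The two endpoint constraints then read $\Pi_{0\mid\theta}+H_{0\mid\theta}=\Sigma_0^{-1}$ and $\Pi_{1\mid\theta}+H_{1\mid\theta}=\Sigma_1^{-1}$, which is \eqref{InitialAndTemrinalConditions}. These two conditions fix both the free multiplier $\Pi_{1\mid\theta}$ and the initial value of $H$.

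\textbf{Main obstacle.} The hard step is justifying the multiplier reduction in Stage 1. Unlike the risk-neutral case, the exponential criterion is not additive in the terminal term: $\log\mathbb{E}\exp(\theta(C+\psi_1))\neq\log\mathbb{E}\exp(\theta C)+\mathbb{E}\psi_1$. So it is not immediate that adding $\psi_1$ leaves the ranking of policies with the prescribed terminal marginal unchanged. My first attempt is a verification argument in the spirit of the completion-of-squares and Radon--Nikodym approach of \cite{duncan2013linear}. For an arbitrary admissible $\gamma$, compute $\mathbb{E}\exp(\theta C)$ under the change of measure induced by $\exp(\theta S)$ to get an identity of the form $\mathbb{E}\exp(\theta C)=\exp(\theta\,\mathbb{E}_{\rho_0}S_0)\cdot\widetilde{\mathbb{E}}\big[\exp(\tfrac{\theta}{2}\int\|u-u_{\rm opt}\|_R^2)\,e^{-\theta\psi_1(x_1)}\big]$, where $\widetilde{\mathbb{E}}$ is taken under the tilted measure. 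I then need to show that the $\psi_1$ factor is controlled using only the marginal constraint. If this proves too delicate, the fallback is to claim only necessary conditions. In that version I restrict to linear feedback policies, which preserve Gaussianity, compute the LEQG cost of $u=-K_tx$ in closed form through its policy-evaluation Riccati equation, and perform first-order variations in $K_t$ subject to the Lyapunov constraint $\Sigma_1=\Sigma_1$. This recovers the same BVP, consistent with the proposition's title ``necessary conditions.''
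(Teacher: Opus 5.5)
\textbf{Where your proposal agrees with the paper.} Your two-stage plan is the paper's own. The paper's proof asserts, by appeal to ``standard linear-quadratic arguments'' and Pontryagin's principle, that the optimum satisfies \eqref{StateCostateODEs} with $\Pi$ obeying the risk-sensitive Riccati equation. It then substitutes \eqref{SigmaToH}. Your Stage 2 is exactly that substitution, and your algebra checks out. With $N:=\Sigma^{-1}=\Pi+H$ one has $\dot N=-NA-A^\top N+NBR^{-1}B^\top\Pi+\Pi BR^{-1}B^\top N-NFF^\top N$, and subtracting \eqref{Pidot} gives \eqref{Hdot}. Your HJB equation and quadratic ansatz are also correct for the LEQG problem with terminal penalty $\psi_1=\frac12x^\top\Pi_1x$.

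\textbf{The gap.} It is the step you flagged: a minimizer of the penalized criterion that happens to satisfy $x_1\sim\rho_1$ need not minimize the constrained criterion. The paper's proof takes this for granted, and neither of your repairs supplies it. On the feasible set,
\[
\tfrac{1}{\theta}\log\mathbb{E}\,e^{\theta(C+\psi_1(x_1))}=\mathbb{E}C+\tfrac{\theta}{2}\operatorname{Var}C+\theta\operatorname{Cov}\bigl(C,\psi_1(x_1)\bigr)+c_{\rho_1}+O(\theta^2).
\]
Here $c_{\rho_1}$ is fixed by $\rho_1$, but the cross-covariance depends on the joint law of $(C,x_1)$, hence on the policy. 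Your change-of-measure identity runs into the same obstruction: $e^{-\theta\psi_1(x_1)}$ is averaged under a tilted law for which $x_1\not\sim\rho_1$. Also, with random $x_0$ the prefactor should be $\mathbb{E}_{\rho_0}e^{\theta S_0(x_0)}$, not $e^{\theta\mathbb{E}_{\rho_0}S_0}$.

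\textbf{The fallback does not recover the BVP.} If you carry it out, it does not recover \eqref{RiccatiRiccatiBVP}. For $u=-K_tx$ the cost is $\int_0^1\frac12\operatorname{tr}(F_tF_t^\top P_t)\,\mathrm{d}t-\frac{1}{2\theta}\log\det(I-\theta\Sigma_0P_0)$, where $-\dot P=(A-BK)^\top P+P(A-BK)+Q+K^\top RK+\theta PFF^\top P$ and $P_1=0$. This is not a running cost in $(\Sigma_t,K_t)$. So Pontryagin's principle applied to the covariance dynamics alone has no Hamiltonian that generates the $\theta FF^\top$ term in \eqref{PidotFromPMP}.

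Adjoin the Lyapunov equation with a symmetric multiplier $\Lambda$, so that $\dot\Lambda=-(A-BK)^\top\Lambda-\Lambda(A-BK)$ with $\Lambda_1$ free. Then the adjoint of $P$ is $\frac12\tilde\Sigma$. Here $\tilde\Sigma$ is the covariance of the exponentially tilted process, with drift $A-BK+\theta FF^\top P$ and $\tilde\Sigma_0=(\Sigma_0^{-1}-\theta P_0)^{-1}$. Stationarity in $K$ then gives $K=R^{-1}B^\top(P+2\Lambda\Sigma\tilde\Sigma^{-1})$.

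\textbf{Scalar check.} Take $F=f\neq0$ and substitute $\Pi:=P+2\Lambda\Sigma/\tilde\Sigma$ into \eqref{Pidot}. This leaves the residual $-2f^2\Lambda\tilde\Sigma^{-2}\bigl(\tilde\Sigma-\Sigma+2\theta\Lambda\Sigma^2\bigr)$. Since $\Lambda$ solves a linear homogeneous ODE, either $\Lambda\equiv0$ or the bracket vanishes identically. Differentiating the bracket and using the ODEs for $\Sigma,\tilde\Sigma,\Lambda$ forces $2\Lambda=-P(1-2\theta\Lambda\Sigma)$. Because $P_1=0$, this gives $\Lambda_1=0$, hence $\Lambda\equiv0$. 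So the first-order conditions coincide with \eqref{Pidot} only when the terminal constraint is inactive.

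The tilted covariance $\tilde\Sigma\neq\Sigma$ is the ingredient both arguments miss. As stated, your Stage 1 (and the paper's corresponding step) is not merely unjustified: this computation indicates it fails for $\theta\neq0$.
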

\vspace*{-0.3in}
\begin{proof} 
That the optimal feedback policy is a linear state feedback follows from the standard linear-quadratic theory arguments \cite{jacobson1973optimal,duncan2013linear,chen2015optimalPartI,chen2018optimal}. In particular, the necessary conditions of optimality for \eqref{LEQGcovariancesteeringproblem} yields an ODE BVP in the covariance state-costate pair $(\Sigma_{t\mid\theta},\Pi_{t\mid\theta})\in\mathbb{S}^{n}_{++}\times\mathbb{S}^{n}$, given by
\begin{subequations}
\begin{align}
\dot{\Sigma}_{t\mid\theta} =& \left(\!A_t - B_t R_t^{-1} B_t^{\top}\Pi_{t\mid\theta}\!\right)\!\Sigma_{t\mid\theta} \nonumber\\
&+ \Sigma_{t\mid\theta}\left(\!A_t - B_t R_t^{-1} B_t^{\top}\Pi_{t\mid\theta}\!\right)^{\!\!\top} + F_t F_t^{\top}, \label{SigmadotFromPMP}\\
-\dot{\Pi}_{t\mid\theta} =& A_t^{\top}\Pi_{t\mid\theta} + \Pi_{t\mid\theta}A_t + Q_t\nonumber\\
&- \Pi_{t\mid\theta}\left(B_t R_t^{-1}B_t^{\top}-\theta F_t F_t^{\top}\right)\Pi_{t\mid\theta}, \label{PidotFromPMP}\\
&\hspace*{-0.4in}\Sigma_{0\mid\theta}=\Sigma_0\succ 0\;\text{given}, \quad \Sigma_{1\mid\theta}=\Sigma_1\succ 0\;\text{given}.\label{BCfromPMP}
\end{align}
\label{StateCostateODEs}   
\end{subequations}
The optimal controller \eqref{OptimalControl} follows from the Pontryagin's minimum principle.

Notice that \eqref{SigmadotFromPMP}-\eqref{PidotFromPMP} comprise a one-way coupled system of Lyapunov-Riccati ODEs subject to boundary conditions \eqref{BCfromPMP} only in covariance. We use the following change-of-variable $\Sigma_{t\mid\theta} \in\mathbb{S}^{n}_{++}\to H_{t\mid\theta} \in\mathbb{S}^{n}$ proposed in \cite{chen2015optimalPartI}:
\begin{align}
H_{t\mid\theta} := \Sigma_{t\mid\theta}^{-1} - \Pi_{t\mid\theta},
\label{SigmaToH} 
\end{align}
to rewrite the coupled Lyapunov-Riccati BVP \eqref{StateCostateODEs} in indeterminate pair $\left(\Sigma_{t\mid\theta},\Pi_{t\mid\theta}\right)$ as the Riccati-Riccati BVP \eqref{RiccatiRiccatiBVP} in indeterminate pair $\left(H_{t\mid\theta},\Pi_{t\mid\theta}\right)$. 
\end{proof}
\noindent In the risk neutral case ($\theta=0$), the highlighted terms in \eqref{Hdot}-\eqref{Pidot} vanish. If in addition, the noise and input channels coincide, i.e.,
\begin{align}
F_t F^{\top}_t = B_t R_t^{-1} B_t^{\top},
\label{SameChannelAssumption}    
\end{align}
then \eqref{RiccatiRiccatiBVP} reduces to the boundary value problem in \cite[eq. (2)]{chen2018optimal}. The system of equations \eqref{OptimalControl}-\eqref{RiccatiRiccatiBVP} can be seen as a risk-sensitive generalization of the same in prior work \cite{chen2018optimal}. 

To isolate the complexities due to the noise and input channel mismatch\footnote{studied in \cite{chen2015optimalPartII} for covariance steering, and more recently in \cite{bondar2026nonlinear} for distribution steering in general.} from that of risk-sensitivity, in the remaining of this letter, we consider the version of \eqref{Hdot}-\eqref{Pidot} together with \eqref{SameChannelAssumption}, i.e.,
\begin{subequations}
\begin{align}
-\dot{H}_{t\mid\theta} =& A^{\top}_t H_{t\mid\theta} + H_{t\mid\theta} A_t + H_{t\mid\theta} B_t R_t^{-1} B_t^{\top} H_{t\mid\theta} -Q_t\nonumber\\
& - \colorbox{MidnightBlue!10}{$\theta\Pi_{t\mid\theta} B_t R_t^{-1} B_t^{\top} \Pi_{t\mid\theta}$},\label{HdotSimplified}\\
-\dot{\Pi}_{t\mid\theta} =& A_t^{\top}\Pi_{t\mid\theta} + \Pi_{t\mid\theta}A_t + Q_t\nonumber\\
&+ \colorbox{MidnightBlue!10}{$(\theta - 1)\Pi_{t\mid\theta}B_t R_t^{-1}B_t^{\top}\Pi_{t\mid\theta}$}, \label{PidotSimplified}
\end{align}
\label{RiccatiRiccatiODESimplified}
\end{subequations}
and the boundary conditions \eqref{InitialAndTemrinalConditions}. As earlier, the highlights in \eqref{RiccatiRiccatiODESimplified} emphasize the terms that are new compared to the risk-neutral setting. In Sec. \ref{subsec:OnTheSolution}, we will explain how these terms give rise to new mathematical challenges not seen in the risk-neutral case.

Our main contribution is to establish the existence-uniqueness of the solution of the system of equations \eqref{RiccatiRiccatiODESimplified} and \eqref{InitialAndTemrinalConditions}, in the neighborhood of the risk-neutral ($\theta=0$) solution\footnote{The existence-uniqueness of the corresponding risk-neutral solution was established in \cite[Theorem 1]{chen2018optimal}.}. To state our main result (Theorem \ref{thm:LocalExistenceUniqueness}), we need to parameterize the solution for \eqref{RiccatiRiccatiODESimplified} and \eqref{InitialAndTemrinalConditions} in terms of certain matrix $Y_{0\mid\theta} \in \mathbb{S}^{n}$ and the given risk-sensitivity parameter $\theta\in\mathbb{R}$. In Sec. \ref{subsec:OnTheSolution}, we detail this (implicit) parametrization.

\subsection{Parametric Solution Structure}\label{subsec:OnTheSolution}
Our starting point is to write the solutions of the Riccati ODEs \eqref{HdotSimplified} and \eqref{PidotSimplified} in terms of the solutions of the following linear Hamiltonian ODEs:
\begin{subequations}
\begin{align} 
\begin{bmatrix}
        \dot{\hat{X}}_{t\mid\theta} \\
        \dot{\hat{Y}}_{t\mid\theta}
    \end{bmatrix}
    \!\!
    =&
    \!\!
    \begingroup
    \setlength{\arraycolsep}{1pt}\underbrace{\begin{bmatrix}
        A_t & -B_t R_t^{-1} B^{\top}_t \\
        -(Q_t + \theta \Pi_{t\mid\theta} B_t R_t^{-1} B^{\top}_t \Pi_{t\mid\theta}) & -A_t^{\top}
    \end{bmatrix}}_{=:\hat{M}_{t\mid\theta}}
    \endgroup
   \!\!\begin{bmatrix}
         \hat{X}_{t\mid\theta} \\
         \hat{Y}_{t\mid\theta}
     \end{bmatrix},\label{defXhatYhat}\\
\begin{bmatrix}
        \dot{X}_{t\mid\theta} \\
        \dot{Y}_{t\mid\theta}
    \end{bmatrix}
    \!\!=&
    \!\underbrace{\begin{bmatrix}
        A_t & (\theta - 1) B_t R_t^{-1} B^{\top}_t \\
        -Q_t & -A_t^{\top}
    \end{bmatrix}}_{=:M_{t\mid\theta}}
    \begin{bmatrix}
        X_{t\mid\theta} \\
        Y_{t\mid\theta}
    \end{bmatrix}. \label{defXY}
\end{align}
\label{HamiltonianODEs}
\end{subequations}
The solutions of \eqref{HdotSimplified} and \eqref{PidotSimplified} admit the well-known \cite[p. 156]{brockett1970finite} linear fractional representations 
\begin{align}
H_{t\mid\theta}  = -\left(\hat{X}_{t\mid\theta}^{\top}\right)^{-1} \hat{Y}_{t\mid\theta}^{\top}, \quad \Pi_{t\mid\theta}  = Y_{t\mid\theta} X_{t\mid\theta}^{-1},
\label{LFTrepresentations}    
\end{align}
provided that $X_{t\mid\theta},\hat{X}_{t\mid\theta}$ are invertible $\forall t\in[0,1]$, $\theta\in\mathbb{R}$. 
\begin{remark}\label{Remark:MnotequaltoMhat}
Unlike the risk-neutral case in \cite{chen2018optimal}, $\hat{M}_{t\mid\theta}\neq M_{t\mid\theta}$ for $\theta\neq 0$.%This prevents us from using the symplectic identities in \cite[Lemma 3]{chen2018optimal}.
\end{remark}
For $0\leq s < t \leq 1$, let $\hat{\Phi}_{s\rightarrow t\mid\theta},\Phi_{s\rightarrow t\mid\theta}\in\mathbb{R}^{2n\times 2n}$ denote the STMs from time $s$ to $t$, associated with the coefficient matrices $\hat{M}_{t\mid\theta},M_{t\mid\theta}$, respectively, i.e.,
\begin{subequations}
\begin{align}
\frac{\partial}{\partial t}\hat{\Phi}_{s\rightarrow t \mid \theta} = \hat{M}_{t\mid\theta}\hat{\Phi}_{s\rightarrow t \mid \theta}, \quad \hat{\Phi}_{s\rightarrow s\mid \theta} = I, \label{PhihatODEivp}\\
\frac{\partial}{\partial t}\Phi_{s\rightarrow t \mid \theta} = M_{t\mid\theta}\Phi_{s\rightarrow t \mid \theta}, \quad \Phi_{s\rightarrow s\mid \theta} = I.\label{PhiODEivp}
\end{align}
\label{STMs}
\end{subequations}
Expressing the corresponding STMs in terms of their $n\times n$ blocks, for any $t\in[0,1]$, we write
% \begin{align}
% \begin{bmatrix}
% \hat{\Phi}_{11} & \hat{\Phi}_{12}\\
% \hat{\Phi}_{21} & \hat{\Phi}_{22}
% \end{bmatrix} := \hat{\Phi}_{0\rightarrow 1\mid\theta}, \quad \begin{bmatrix}
% \Phi_{11} & \Phi_{12}\\
% \Phi_{21} & \Phi_{22}
% \end{bmatrix} := \Phi_{0\rightarrow 1\mid\theta}.
% \label{STMblocks}
% \end{align}
% Then 
\begin{align*}
&\begin{bmatrix}
         \hat{X}_{t\mid\theta} \\
         \hat{Y}_{t\mid\theta}
     \end{bmatrix}\!=\!\underbrace{\begin{bmatrix}\hat{\Phi}_{0\rightarrow t\mid\theta}^{11} & \hat{\Phi}_{0\rightarrow t\mid\theta}^{12}\\
\hat{\Phi}_{0\rightarrow t\mid\theta}^{21} & \hat{\Phi}_{0\rightarrow t\mid\theta}^{22}
\end{bmatrix}}_{= \hat{\Phi}_{0\rightarrow t\mid\theta}}
\!\begin{bmatrix}
         \hat{X}_{0\mid\theta} \\
         \hat{Y}_{0\mid\theta}
     \end{bmatrix},\\
     &\begin{bmatrix}
         X_{t\mid\theta} \\
         Y_{t\mid\theta}
     \end{bmatrix}\!=\!\underbrace{\begin{bmatrix}\Phi_{0\rightarrow t\mid\theta}^{11} & \Phi_{0\rightarrow t\mid\theta}^{12}\\
\Phi_{0\rightarrow t\mid\theta}^{21} & \Phi_{0\rightarrow t\mid\theta}^{22}
\end{bmatrix}}_{=\Phi_{0\rightarrow t\mid\theta}}\!\begin{bmatrix}
         X_{0\mid\theta} \\
         Y_{0\mid\theta}
     \end{bmatrix}\!.    
\end{align*}
As in the proof of \cite[Theorem 1]{chen2018optimal}, without loss of generality, we can set $\hat{X}_{0\mid\theta}=X_{0\mid\theta}=I$ since their initial values can be absorbed into $\hat{Y}_{0\mid\theta},Y_{0\mid\theta}$ keeping $H_{0\mid\theta},\Pi_{0\mid\theta}\in\mathbb{S}^{n}$ invariant. Then from \eqref{LFTrepresentations}, both $\hat{Y}_{0\mid\theta},Y_{0\mid\theta}$ are symmetric. In particular, combining \eqref{SigmaToH} and \eqref{LFTrepresentations}, we obtain
\begin{align}
\hat{Y}_{0\mid\theta} = Y_{0\mid\theta} - \Sigma_{0}^{-1}.
\label{RelatingY0AndY0hat}    
\end{align}
Furthermore,
\begin{subequations}
\begin{align}
X_{t\mid\theta} &= \Phi_{0\rightarrow t\mid\theta}^{11} + \Phi_{0\rightarrow t\mid\theta}^{12}Y_{0\mid\theta},\label{XtFromYt}\\
Y_{t\mid\theta} &= \Phi_{0\rightarrow t\mid\theta}^{21} + \Phi_{0\rightarrow t\mid\theta}^{22}Y_{0\mid\theta},\label{YtFromY0}
\end{align}
\label{XtYtFromY0}
\end{subequations}
and hence from \eqref{LFTrepresentations}, the matrix $\Pi_{t\mid\theta}\in\mathbb{S}^{n}$ is determined by the choice of $Y_{0\mid\theta}\in\mathbb{S}^{n}$.

A notable departure from the risk-neutral case is that the matrix $\hat{M}_{t\mid\theta}$ in \eqref{defXhatYhat} depends explicitly on $\Pi_{t\mid\theta}\in\mathbb{S}^{n}$, and therefore on $Y_{0\mid\theta}$. Hence, the blocks of $\hat{\Phi}_{0\rightarrow t\mid\theta}$ should be viewed as functions of both $\theta$ and $Y_{0\mid\theta}$. In contrast, the blocks of $\Phi_{0\rightarrow t\mid\theta}$ are functions of $\theta$, but do not depend on $Y_{0\mid\theta}$. 

Particularizing the above observations for $t=1$, we find
\begin{subequations}
    \begin{align}
            &H_{1\mid\theta} = -\left(\hat{X}_{1\mid\theta}^{\top}\right)^{-1} \hat{Y}_{1\mid\theta}^{\top}=\nonumber\\
        &{\small{-\!\left(\!\!\left(\!{\hat{\Phi}}_{0\rightarrow 1\mid\theta}^{11}\right)^{\!\!\top}\!\!\!+\!\hat{Y}_{0\mid\theta}\left(\!{\hat{\Phi}}_{0\rightarrow 1\mid\theta}^{12}\right)^{\!\!\top}\!\right)^{\!\!-1}\!\!\!\!\left(\!\!\left(\!{\hat{\Phi}_{0\rightarrow 1\mid\theta}^{21}}\!\right)^{\!\!\top}\!\!\!+\! \hat{Y}_{0\mid\theta}\!\left(\!{\hat{\Phi}_{0\rightarrow 1\mid\theta}^{22}}\!\right)^{\!\!\!\top}\!\right)\!}},\label{H1}\\
        &\Pi_{1\mid\theta} = Y_{1\mid\theta} X_{1\mid\theta}^{-1} \nonumber\\
        &= \!\left(\!\Phi_{0\rightarrow 1\mid\theta}^{21} + \Phi_{0\rightarrow 1\mid\theta}^{22} Y_{0\mid\theta}\!\right)\!\!\left(\!\Phi_{0\rightarrow 1\mid\theta}^{11} + \Phi_{0\rightarrow 1\mid\theta}^{12}Y_{0\mid\theta}\!\right)^{\!\!-1}\!\!.\label{Pi1}
    \end{align}
\label{Pi1AndH1}    
\end{subequations}
Since \eqref{RelatingY0AndY0hat} allows us to write $\hat{Y}_{0\mid\theta}$ in terms of $Y_{0\mid\theta}$, we view \eqref{H1}-\eqref{Pi1} as functions of $\theta$ and $Y_{0\mid\theta}$. Therefore, using \eqref{SigmaToH}, we arrive at an implicit algebraic equation in indeterminate $Y_{0\mid\theta}$ given by
\begin{align}
G(\theta,Y_{0\mid\theta}) := \Pi_{1\mid\theta} + H_{1\mid\theta} - \Sigma_{1}^{-1} = 0.
\label{defG}
\end{align}
If a matrix $Y_{0\mid\theta}\in\mathbb{S}^{n}$ solving \eqref{defG} exists and is unique in a suitable sense, then it determines $\Pi_{t\mid\theta}$ (via \eqref{LFTrepresentations}), and thus the optimal control \eqref{OptimalControl}. However, an explicit solution of \eqref{defG} in the form $Y_{0\mid\theta}=Y_{0\mid\theta}(\theta)$ is not available in general.

In the risk-neutral ($\theta=0$) case, $\hat{M}_{t\mid 0}= M_{t\mid 0}$, so the corresponding STMs match\footnote{$X_{t\mid 0}\neq \hat{X}_{t\mid 0}$ in general, since they solve the same Hamiltonian ODE but with different initial conditions. This makes Lemma \ref{lemma:PartialJacobianOfGAtThetaEqualToZero} in Sec. \ref{sec:Proofs} nontrivial.}, i.e., $\hat{\Phi}_{s\rightarrow t\mid 0}=\Phi_{s\rightarrow t\mid 0}$. This STM matching plays a crucial role in the solution derived in \cite[Theorem 1]{chen2018optimal}. In that case, combining the terminal boundary condition $\Sigma_{1}^{-1}=\Pi_{1\mid0} + H_{1\mid0}$ with \eqref{Pi1AndH1}, and using \eqref{RelatingY0AndY0hat}, the symplectic identities in \cite[Lemma 3]{chen2018optimal} turn \eqref{defG} into a  quadratic equation in unknown $Y_{0\mid 0}$. In the proof for \cite[Theorem 1]{chen2018optimal}, the two roots ($Y_{0\mid 0}^{+},Y_{0\mid 0}^{-}$) of this quadratic equation found via completion-of-square, were 
\begin{align}
&Y_{0\mid 0}^{\pm} = \frac{1}{2}\Sigma_0^{-1}-\left(\Phi^{12}_{0\rightarrow 1}\right)^{-1}\Phi^{11}_{0\rightarrow 1}\nonumber\\
&\pm \Sigma_{0}^{-\frac{1}{2}}\left(\frac{1}{4}I + \Sigma_{0}^{\frac{1}{2}}\left(\Phi^{12}_{0\rightarrow 1}\right)^{-1}\Sigma_{1}\left(\Phi^{12}_{0\rightarrow 1}\right)^{-\top}\Sigma_{0}^{\frac{1}{2}}\right)^{\frac{1}{2}}\Sigma_{0}^{-\frac{1}{2}}.
\label{Y0given0PlusMinus}   
\end{align}
In that work, it was shown that only one of these roots, namely $Y_{0\mid 0}^{-}$, 
ensures invertibility\footnote{Their lack of invertibility for $Y_{0\mid 0}^{+}$ implies finite escape times for $\Pi_{t\mid 0}$,$H_{t\mid 0}$ for some $t\in[0,1]$, i.e., inadmissible initial condition for \eqref{RiccatiRiccatiODESimplified} at $\theta=0$.} of $X_{t\mid 0},\hat{X}_{t\mid 0}$ $\forall t\in[0,1]$. By \eqref{LFTrepresentations}, this in turn ensures the unique admissible solution of \eqref{RiccatiRiccatiODESimplified} for $\theta=0$.

In our $\theta \neq 0$ case, per Remark \ref{Remark:MnotequaltoMhat}, $\hat{\Phi}_{s\rightarrow t\mid\theta}\neq\Phi_{s\rightarrow t\mid\theta}$, and the \emph{mixed} STM terms no longer satisfy the same symplectic identities. Consequently, we have to contend with the implicit algebraic equation \eqref{defG} in $Y_{0\mid\theta}$. Now we are ready to state our main result.
\begin{theorem}\label{thm:LocalExistenceUniqueness}
Consider problem \eqref{LEQGcovariancesteeringproblem} with fixed finite $\theta\in\mathbb{R}$, assumptions A1-A3, and the matched channel condition \eqref{SameChannelAssumption}. Let $Y_{0\mid0}^{-}$ denote the unique admissible solution of \eqref{defG} in the risk-neutral LQG covariance steering problem. Then there exist open sets $\mathcal{I}\subset\mathbb{R}$, $\mathcal{U}\subset\mathbb{S}^{n}$ such that $\left(0,Y_{0\mid0}^{-}\right)\in \mathcal{I}\times \mathcal{U}$, and a unique $\mathcal{C}^{1}$ mapping $h:\mathcal{I}\rightarrow\mathcal{U}$ such that $h(0)=Y_{0\mid0}^{-}$ and $G(\theta,h(\theta))=0$ $\forall\theta\in\mathcal{I}$. In other words, the LEQG problem \eqref{LEQGcovariancesteeringproblem} has a local $\mathcal{C}^1$ solution branch\footnote{For $\theta\in\mathcal{I}$, both $\hat{X}_{t\mid\theta},X_{t\mid\theta}$ remain invertible $\forall t\in[0,1]$ by continuity (determinants are continuous in $\theta$, and by \cite[Thm. 1]{chen2018optimal}, invertibility holds at $\theta=0$).} through $Y_{0\mid0}^{-}$.
\end{theorem}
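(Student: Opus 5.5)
The plan is to apply the implicit function theorem to $G:\mathbb{R}\times\mathbb{S}^{n}\to\mathbb{S}^{n}$ from \eqref{defG} at the point $(0,Y_{0\mid0}^{-})$. Three ingredients are needed: (i) $G(0,Y_{0\mid0}^{-})=0$; (ii) $G$ is $\mathcal{C}^{1}$ on a neighborhood of $(0,Y_{0\mid0}^{-})$, and $G$ actually takes values in $\mathbb{S}^n$ there; (iii) the partial Fr\'echet derivative $\Jac_{Y}G(0,Y_{0\mid0}^{-}):\mathbb{S}^{n}\to\mathbb{S}^{n}$ is a linear isomorphism. Ingredient (i) is exactly \cite[Thm.~1]{chen2018optimal}, since at $\theta=0$ the system \eqref{RiccatiRiccatiODESimplified} reduces to the risk-neutral BVP.

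For (ii), write $\Pi_{t\mid\theta}=(\Phi^{21}_{0\to t\mid\theta}+\Phi^{22}_{0\to t\mid\theta}Y)(\Phi^{11}_{0\to t\mid\theta}+\Phi^{12}_{0\to t\mid\theta}Y)^{-1}$. Here $M_{t\mid\theta}$ is affine in $\theta$, so $\Phi$ is smooth in $\theta$ by smooth dependence of linear ODEs on parameters. Invertibility of $X_{t\mid\theta}$ holds on $[0,1]$ at $(0,Y_{0\mid0}^{-})$, and by compactness of $[0,1]$ and continuity of the determinant it persists on a neighborhood. On that neighborhood $\Pi_{t\mid\theta}$ is $\mathcal{C}^{1}$ (indeed real-analytic) in $(\theta,Y)$, uniformly in $t$. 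Hence $\hat M_{t\mid\theta}$ is $\mathcal{C}^{1}$ in $(\theta,Y)$, continuous in $t$, and differentiable dependence of ODE solutions on parameters makes $\hat\Phi_{0\to1\mid\theta}$ a $\mathcal{C}^{1}$ function of $(\theta,Y)$. Applying the same continuity argument to $\hat X_{1\mid\theta}$, with $\hat Y_{0}=Y-\Sigma_0^{-1}$ from \eqref{RelatingY0AndY0hat}, shows that \eqref{H1}--\eqref{Pi1} are $\mathcal{C}^{1}$. Symmetry of $G$ follows because $\Pi_{1\mid\theta}$ and $H_{1\mid\theta}$ solve symmetric Riccati ODEs with symmetric initial data $\Pi_0=Y$ and $H_0=\Sigma_0^{-1}-Y$.

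The main obstacle is (iii). The plan is to differentiate in $Y$ at $\theta=0$. There the $\theta\Pi B R^{-1}B^\top\Pi$ term vanishes, so $\hat M_{t\mid0}=M_{t\mid0}$ does not depend on $Y$, and the $Y$-dependence enters only through the initial conditions. Thus $G(0,\cdot)$ is precisely the risk-neutral map. By \cite[Lemma 3]{chen2018optimal} and the completion-of-square argument, it equals (up to an invertible congruence) the quadratic matrix function whose roots are \eqref{Y0given0PlusMinus}. Concretely, I expect $G(0,Y)=L(Y)^{\top}\big[(Y-c)^{\top}\Sigma_0(Y-c)-D^2\big]L(Y)$ or a similar form, with $c=\tfrac12\Sigma_0^{-1}-(\Phi^{12})^{-1}\Phi^{11}$, $D=\Sigma_0^{-1/2}(\cdot)^{1/2}\Sigma_0^{-1/2}$ as in \eqref{Y0given0PlusMinus}, and $L(Y)$ invertible near $Y_{0\mid0}^{-}$.

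Differentiating at the root $Y_{0\mid0}^{-}=c-\Sigma_0^{-1/2}S\Sigma_0^{-1/2}$, with $S\succ0$ the principal square root, the bracket's derivative becomes, after congruence by $\Sigma_0^{1/2}$, the Lyapunov (Sylvester) map $\Delta\mapsto -(S\Delta'+\Delta' S)$. Its eigenvalues are $-(\lambda_i(S)+\lambda_j(S))<0$, so it is invertible; in Kronecker form, $S\otimes I+I\otimes S$ is nonsingular. The step I would check most carefully is that the prefactor $L$ and the change of variables from \cite[Lemma 3]{chen2018optimal} are genuinely invertible at $Y_{0\mid0}^-$; this uses invertibility of $\Phi^{12}_{0\to1}$, which follows from controllability in A1, together with invertibility of $X_{1\mid0}$ and $\hat X_{1\mid0}$.

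With (i)--(iii) in place, the implicit function theorem on the finite-dimensional space $\mathbb{R}\times\mathbb{S}^{n}$ yields open $\mathcal{I}\ni0$, $\mathcal{U}\ni Y_{0\mid0}^{-}$ and a unique $\mathcal{C}^{1}$ map $h$ with $h(0)=Y_{0\mid0}^-$ and $G(\theta,h(\theta))=0$. Shrinking $\mathcal{I}\times\mathcal{U}$ to lie inside the invertibility neighborhood from (ii) gives admissibility, as stated in the footnote to Theorem~\ref{thm:LocalExistenceUniqueness}.
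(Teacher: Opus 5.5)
Your proposal is correct. It has the same architecture as the paper's proof: the implicit function theorem at $(0,Y_{0\mid0}^{-})$ from $\mathcal{C}^1$ regularity, $G(0,Y_{0\mid0}^{-})=0$, and a nonsingular partial Jacobian. Your step (iii), however, takes a genuinely different route.

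\textbf{The paper's route.} Write $\Phi^{12}:=\Phi^{12}_{0\rightarrow1\mid0}$. Lemma~\ref{lemma:PartialJacobianOfGAtThetaEqualToZero} first computes the partial Jacobian at an arbitrary $Y$, by differentiating the linear-fractional forms and using the symplectic identities; the result is $X_{1\mid0}^{-\top}\otimes X_{1\mid0}^{-\top}-\hat X_{1\mid0}^{-\top}\otimes\hat X_{1\mid0}^{-\top}$. Proposition~\ref{prop:NonsingularityOfPartialJacobianOfGAtThetaEqualToZero} then substitutes $X_{1\mid0}=-\Phi^{12}(\Psi_{01}-S_0)$ and $\hat X_{1\mid0}=-\Phi^{12}(\Psi_{01}+S_0)$. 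It shows the resulting $\Omega$ is positive definite by simultaneously diagonalizing $(\Psi_{01},S_0)$, which needs the sharper fact $\Psi_{01}-S_0\succ0$.

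\textbf{Your route.} You factor $G(0,\cdot)$ through the completed square and linearize at the root. All terms multiplying the vanishing bracket drop out, so nonsingularity reduces to that of the Lyapunov operator $S\otimes I+I\otimes S$ with $S\succ0$. The condition $\Psi_{01}-S_0\succ0$ then enters only through invertibility of the outer factors, i.e.\ of $X_{1\mid0}$ and $\hat X_{1\mid0}$. You correctly import that from the admissibility of $Y_{0\mid0}^{-}$.

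\textbf{What each buys.} Your route is shorter and exhibits $Y_{0\mid0}^{-}$ as a simple root of a matrix quadratic. The paper's route gives an explicit Jacobian away from the root. That formula is what extends to $\theta\neq0$ (Remark~\ref{remark:PartialJacobianAtnonzerotheta}) and drives the Newton iteration in Sec.~\ref{sec:Numerics}. Separately, your step (ii), via the linear Hamiltonian flows and compactness of $[0,1]$, is more explicit than Proposition~\ref{prop:G_C1} about the Riccati solutions existing on all of $[0,1]$ near the base point.

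\textbf{Correction to your factorization.} Your guessed form needs two fixes when you write out (iii). Let $Z:=(\Phi^{12})^{-1}\Phi^{11}_{0\rightarrow1\mid0}+Y$, which is symmetric by symplecticity. Then $X_{1\mid0}=\Phi^{12}Z$, $\hat X_{1\mid0}=\Phi^{12}(Z-\Sigma_0^{-1})$, and $G(0,Y)=N(Y)-\Sigma_1^{-1}$, where $N(Y):=(\Phi^{12})^{-\top}[(Z-\Sigma_0^{-1})^{-1}-Z^{-1}](\Phi^{12})^{-1}$. With your $c$ and $D$, this gives $G(0,Y)=-N(Y)\,\Phi^{12}\,[(Y-c)\Sigma_0(Y-c)-D\Sigma_0D]\,(\Phi^{12})^{\top}\Sigma_1^{-1}$.
\begin{itemize}
\item The constant is $D\Sigma_0D=\tfrac14\Sigma_0^{-1}+(\Phi^{12})^{-1}\Sigma_1(\Phi^{12})^{-\top}$, not $D^2$. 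With $D^2$ the bracket would not vanish at $Y_{0\mid0}^{-}$, and the product-rule cancellation would fail.
\item The outer factors are a two-sided product of invertible matrices, not a congruence. That is all your argument needs.
\end{itemize}
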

\noindent In Section \ref{sec:Proofs} that follows, we prove Theorem \ref{thm:LocalExistenceUniqueness} using several auxiliary results (Proposition \ref{prop:G_C1}, Lemma \ref{lemma:PartialJacobianOfGAtThetaEqualToZero}, Proposition \ref{prop:NonsingularityOfPartialJacobianOfGAtThetaEqualToZero}) derived therein.

%%%%%%%%%%%%%%%%%%%%%%%%%%%%%%%%%%

\section{Proofs and Auxiliary Results}\label{sec:Proofs}
Our results in this Section are organized and interconnected as in the block diagram below.
\begin{figure}[h!]
\centering
\includegraphics[width=\linewidth]{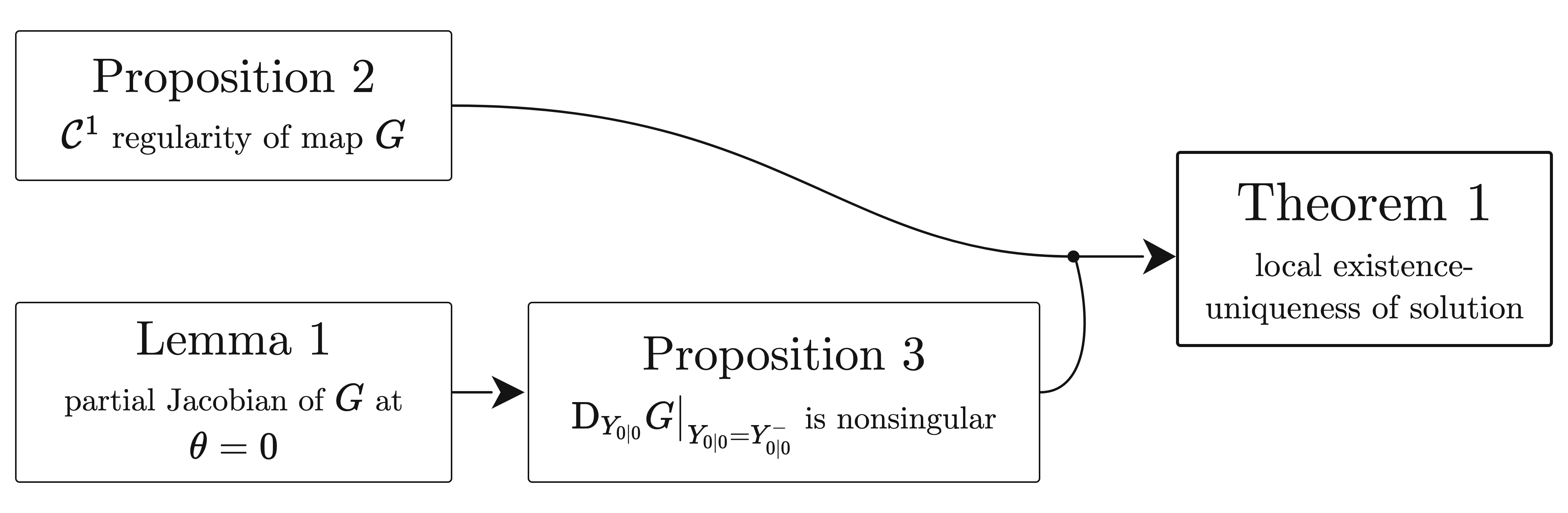}
\end{figure}
\begin{proposition} \label{prop:G_C1}
Consider $G$ in \eqref{defG} as a nonlinear map $G(\theta,Y_{0\mid\theta}):\mathcal{I}\times\mathcal{U}\mapsto\mathcal{V}$ where $\mathcal{I}\times\mathcal{U}$ and $\mathcal{V}$ are bounded open subsets of $\mathbb{R}\times\mathbb{S}^{n}$ and $\mathbb{S}^{n}$, respectively. Then $G\in \mathcal{C}^1(\mathcal{I}\times\mathcal U;\mathcal{V})$.
\end{proposition}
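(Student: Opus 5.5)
We view $G$ as a composition of maps that are each $\mathcal{C}^1$ (in fact real analytic) in $(\theta,Y_{0\mid\theta})$. Then $G\in\mathcal{C}^1$ follows from the chain rule, and from the smoothness of matrix inversion on the open set of invertible matrices. The set $\mathcal{I}\times\mathcal{U}$ is taken bounded, and small enough around $(0,Y_{0\mid0}^{-})$ that the inverses appearing in \eqref{LFTrepresentations} and \eqref{Pi1AndH1} exist. Concretely, we require $X_{t\mid\theta}$ and $\hat X_{t\mid\theta}$ to be invertible for all $t\in[0,1]$, which holds at $\theta=0$ by \cite[Thm. 1]{chen2018optimal}.

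\textbf{Step 1 (the $\Pi$ part).} The matrix $M_{t\mid\theta}$ in \eqref{defXY} is affine in $\theta$ and continuous in $t$ by A1--A2. By the standard theorem on parameter dependence for linear ODEs, $\Phi_{0\rightarrow t\mid\theta}$ is $\mathcal{C}^1$ in $\theta$, uniformly in $t\in[0,1]$. Its $\theta$-derivative solves the variational equation $\partial_t(\partial_\theta\Phi)=M\,\partial_\theta\Phi+(\partial_\theta M)\Phi$ with zero initial condition. Hence $X_{t\mid\theta}$ and $Y_{t\mid\theta}$ in \eqref{XtYtFromY0} are $\mathcal{C}^1$ in $(\theta,Y_{0\mid\theta})$, being affine in $Y_{0\mid\theta}$. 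Since $X_{t\mid\theta}$ is invertible on the chosen set, $\Pi_{t\mid\theta}=Y_{t\mid\theta}X_{t\mid\theta}^{-1}$ is $\mathcal{C}^1$ in $(\theta,Y_{0\mid\theta})$. The same holds for its endpoint value $\Pi_{1\mid\theta}$ in \eqref{Pi1}. We will also record that $\Pi_{t\mid\theta}$, together with its first partial derivatives, is continuous in $t$ jointly with the parameters.

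\textbf{Step 2 (the $H$ part, the main obstacle).} The coefficient $\hat M_{t\mid\theta}$ in \eqref{defXhatYhat} depends on $(\theta,Y_{0\mid\theta})$ through $\theta\,\Pi_{t\mid\theta}B_tR_t^{-1}B_t^\top\Pi_{t\mid\theta}$. By Step 1, the map $(t,\theta,Y_{0\mid\theta})\mapsto\hat M_{t\mid\theta}$ is continuous. It is also $\mathcal{C}^1$ in the parameters, with derivatives continuous in $t$. We then invoke the differentiability theorem for solutions of ODEs with respect to parameters entering the vector field. The vector field here is $(t,Z,p)\mapsto\hat M_t(p)Z$, with $p=(\theta,Y_{0\mid\theta})$. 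It is linear in $Z$ and $\mathcal{C}^1$ in $p$, and only continuous in $t$, which is enough for that theorem. This gives that $\hat\Phi_{0\rightarrow1\mid\theta}$ is $\mathcal{C}^1$ in $p$.

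To avoid subtleties, one can instead write the augmented system for $(X,Y,\hat X,\hat Y)$. This system has the fixed initial condition $(I,Y_{0\mid\theta},I,Y_{0\mid\theta}-\Sigma_0^{-1})$, obtained from \eqref{RelatingY0AndY0hat}, and its right-hand side is $\mathcal{C}^1$ on the open set where $X$ is invertible. The initial condition depends affinely on $Y_{0\mid\theta}$. Standard $\mathcal{C}^1$ dependence on initial data and parameters then applies directly. The only care needed is to show that trajectories stay in the region $\{\det X\neq0\}$ over $[0,1]$, and this is guaranteed by the choice of $\mathcal{I}\times\mathcal{U}$.

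\textbf{Step 3 (assembly).} With $\hat Y_{0\mid\theta}=Y_{0\mid\theta}-\Sigma_0^{-1}$, formula \eqref{H1} expresses $H_{1\mid\theta}$ in terms of the blocks of $\hat\Phi_{0\rightarrow1\mid\theta}$, $\hat Y_{0\mid\theta}$, and one matrix inverse. The matrix being inverted is $\hat X_{1\mid\theta}^\top$, which is invertible on the chosen set. Hence $H_{1\mid\theta}$ is $\mathcal{C}^1$ there. It follows that $G=\Pi_{1\mid\theta}+H_{1\mid\theta}-\Sigma_1^{-1}$ is $\mathcal{C}^1$. Its values are symmetric, because $\Pi_{t\mid\theta}$ and $H_{t\mid\theta}$ solve symmetric Riccati ODEs with symmetric initial data. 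Finally, $\mathcal V$ is taken as any bounded open set containing the image of $G$. Such a set exists after shrinking $\mathcal{I}\times\mathcal{U}$ to have compact closure inside the admissible region, since $G$ is continuous on that closure.
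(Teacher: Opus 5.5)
Your proof is correct, but it is built around a different object than the paper's. The paper never uses the state transition matrices in this proof. It applies Hartman's parameter-dependence theorem directly to the nonlinear Riccati IVPs, in two stages:
\begin{itemize}
\item first to \eqref{PidotSimplified}, with initial value $Y_{0\mid\theta}$ and parameter $\theta$;
\item then to \eqref{HdotSimplified}, with initial value $\Sigma_0^{-1}-Y_{0\mid\theta}$ and with $(\theta,Y_{0\mid\theta})$ entering the vector field through $\Pi_{t\mid\theta}$.
\end{itemize}
It then evaluates both at $t=1$.

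You instead work with the linear Hamiltonian flows. There $\Phi_{0\rightarrow t\mid\theta}$ exists globally and is smooth in $\theta$, $X_{t\mid\theta}$ and $Y_{t\mid\theta}$ are affine in $Y_{0\mid\theta}$, and all the nonlinearity sits in the matrix inversions of \eqref{LFTrepresentations} and \eqref{Pi1AndH1}. You still need an ODE sensitivity theorem for $\hat\Phi_{0\rightarrow 1\mid\theta}$, because $\hat M_{t\mid\theta}$ depends on $\Pi_{t\mid\theta}$. That step is the same in substance as the paper's second step, and your augmented $(X,Y,\hat X,\hat Y)$ variant is essentially the paper's argument recast as a single system.

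What your route buys is that the domain of definition of $G$ becomes explicit. Finite escape of the Riccati solutions is exactly loss of invertibility of $X_{t\mid\theta}$ or $\hat X_{t\mid\theta}$. You rule this out near $(0,Y_{0\mid0}^{-})$ using \cite{chen2018optimal} together with continuity and compactness of $[0,1]$. You also verify that $G$ takes values in $\mathbb{S}^n$ and produce an admissible bounded $\mathcal{V}$.

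The paper's proof is shorter, but it tacitly assumes that both Riccati solutions exist on all of $[0,1]$ for every parameter in $\mathcal{I}\times\mathcal{U}$. Hartman's theorem gives $\mathcal{C}^1$ dependence only on the open domain of the general solution, not existence up to $t=1$; the paper addresses this only in the footnote to Theorem \ref{thm:LocalExistenceUniqueness}.

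Localizing at $(0,Y_{0\mid0}^{-})$ costs you nothing. Your argument applies verbatim on any open parameter set where $\det X_{t\mid\theta}\neq 0$ and $\det \hat X_{t\mid\theta}\neq 0$ for all $t\in[0,1]$, which is the implicit domain of $G$ in the statement.

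One ordering detail in the direct (non-augmented) version: you must first shrink so that $X_{t\mid\theta}$ is invertible on $[0,1]$. Only then are $\Pi_{t\mid\theta}$, and hence $\hat M_{t\mid\theta}$, defined, and you can shrink again for $\hat X_{t\mid\theta}$. The augmented-system version handles this automatically through openness of the domain of the general solution.
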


\begin{proof}
We establish the desired regularity of $G$ w.r.t. the tuple $\left(\theta,Y_{0\mid\theta}\right)$ by following the dependence structure of the quantities entering the definition of $G$:
\[
\left(\theta,Y_{0\mid\theta}\right)\!
\longmapsto\!
\Pi_{t\mid\theta}\left(\theta,Y_{0\mid\theta}\right)
\!\longmapsto\!
H_{t\mid\theta}\left(\theta,Y_{0\mid\theta}\right)
\!\longmapsto\!
G(\theta,Y_{0\mid\theta}).
\]
All matrix spaces involved are finite-dimensional and may therefore be identified with Euclidean spaces.%, so that the standard differentiability results for ODEs apply. 

Consider the Riccati ODE \eqref{PidotSimplified} in indeterminate $\Pi_{t\mid\theta}$ with initial condition $\Pi_{0\mid\theta}=Y_{0\mid\theta}$, where $\theta$ serves as a parameter. The corresponding vector field is continuous in $t$, and $\mathcal{C}^{1}$ w.r.t. $(\Pi_{t\mid\theta},\theta)$ since it comprises of continuous and bounded in time $A_t,B_t,Q_t$ (Assumptions A1-A2), together with finite sums and products involving $\Pi_{t\mid\theta}$ and $\theta$. Therefore, by the smoothness theorem for parameter-dependent ODE IVP \cite[Chapter V, Theorem 3.1]{hartman2002ordinary}, $\Pi_{t\mid\theta}$ is jointly $\mathcal{C}^1$ w.r.t. $(\theta,Y_{0\mid\theta})$.

Next, consider the Riccati ODE \eqref{HdotSimplified} in indeterminate $H_{t\mid\theta}$ with initial condition (see \eqref{SigmaToH}) $H_{0\mid\theta}=\Sigma_0^{-1}-Y_{0\mid\theta}$ (which is affine, thus $\mathcal{C}^1$ in $Y_{0\mid\theta}$). Since $\Pi_{t\mid\theta}$ is jointly $\mathcal{C}^1$ w.r.t. $(\theta,Y_{0\mid\theta})$, the vector field of \eqref{HdotSimplified} is continuous in $t$, and $\mathcal{C}^{1}$ w.r.t. $(H_{t\mid\theta},\theta,Y_{0\mid\theta})$. By the same smoothness theorem \cite[Chapter V, Theorem 3.1]{hartman2002ordinary}, $H_{t\mid\theta}$ is jointly $\mathcal{C}^1$ w.r.t. $(\theta,Y_{0\mid\theta})$.

Since evaluation at the fixed terminal time $t=1$ preserves $\mathcal{C}^1$-regularity, both $\Pi_{1\mid\theta},H_{1\mid\theta}$ are $\mathcal{C}^1$ in $(\theta,Y_{0\mid\theta})$. The matrix $\Sigma_1^{-1}$ is constant. Therefore, $G\in \mathcal{C}^1(\mathcal{I}\times\mathcal U;\mathcal{V})$. 
\end{proof}

\begin{lemma}\label{lemma:PartialJacobianOfGAtThetaEqualToZero}
At $\theta=0$, the partial Jacobian
\begin{align}
\Jac_{Y_{0\mid 0}}G = X_{1\mid 0}^{-\top} \otimes X_{1\mid 0}^{-\top} - \hat{X}_{1\mid 0}^{-\top} \otimes \hat{X}_{1\mid 0}^{-\top}.
\label{PartialJacobian}
\end{align}
\end{lemma}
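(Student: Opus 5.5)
The plan is to compute the Fréchet derivative of $G$ in $Y_{0\mid 0}$ directly, using the linearized Riccati equations rather than the mixed-STM algebra. Then vectorize with $\mathrm{vec}(L\,\Delta\,N)=(N^{\top}\otimes L)\,\mathrm{vec}(\Delta)$.

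\textbf{Step 1: decoupling at $\theta=0$.} At $\theta=0$ the highlighted term in \eqref{HdotSimplified} vanishes, so the $H$-equation no longer depends on $\Pi_{t\mid 0}$. Hence
\[
G(0,Y_{0\mid 0})=\mathcal{R}^{\Pi}_{1}\big(Y_{0\mid 0}\big)+\mathcal{R}^{H}_{1}\big(\Sigma_0^{-1}-Y_{0\mid 0}\big)-\Sigma_1^{-1},
\]
where $\mathcal{R}^{\Pi}_{1},\mathcal{R}^{H}_{1}$ denote the time-$1$ flow maps of the two decoupled Riccati ODEs \eqref{PidotSimplified} and \eqref{HdotSimplified} at $\theta=0$. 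Their differentiability is guaranteed by Proposition \ref{prop:G_C1}. By the chain rule, for a symmetric perturbation $\Delta$,
\[
\Jac_{Y_{0\mid0}}G[\Delta]=\delta\Pi_1-\delta H_1,
\]
where $\delta\Pi_t$ and $\delta H_t$ solve the variational equations with $\delta\Pi_0=\Delta$ and $\delta H_0=\Delta$, respectively; the overall minus sign on the $H$-part comes from $H_{0\mid 0}=\Sigma_0^{-1}-Y_{0\mid 0}$.

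\textbf{Step 2: variational equations are Lyapunov equations.} Differentiating \eqref{PidotSimplified} at $\theta=0$ gives
\[
-\dot{\delta\Pi}=K_t^{\top}\delta\Pi+\delta\Pi\,K_t,\qquad K_t:=A_t-B_tR_t^{-1}B_t^{\top}\Pi_{t\mid0}.
\]
From \eqref{defXY} and $Y_{t\mid0}=\Pi_{t\mid0}X_{t\mid0}$ we get $\dot X_{t\mid0}=K_tX_{t\mid0}$ with $X_{0\mid0}=I$. A one-line check, using $\frac{d}{dt}X^{-1}=-X^{-1}K$, then shows that $\delta\Pi_t=X_{t\mid0}^{-\top}\Delta X_{t\mid0}^{-1}$ solves this equation with the right initial value.

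For $H$, I use \eqref{LFTrepresentations} together with the symmetry of $H_{t\mid0}$ to write $\hat{Y}_{t\mid0}=-H_{t\mid0}\hat X_{t\mid0}$. Then \eqref{defXhatYhat} at $\theta=0$ gives $\dot{\hat X}_{t\mid0}=\hat K_t\hat X_{t\mid0}$ with $\hat K_t:=A_t+B_tR_t^{-1}B_t^{\top}H_{t\mid0}$. Differentiating \eqref{HdotSimplified} at $\theta=0$ gives
\[
-\dot{\delta H}=\hat K_t^{\top}\delta H+\delta H\,\hat K_t,
\]
so likewise $\delta H_t=\hat X_{t\mid0}^{-\top}\Delta\hat X_{t\mid0}^{-1}$. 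Both formulas require invertibility of $X_{t\mid0},\hat X_{t\mid0}$ on $[0,1]$, which holds at $Y_{0\mid0}^{-}$ by \cite[Thm. 1]{chen2018optimal}.

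\textbf{Step 3: vectorize.} Evaluating at $t=1$ gives
\[
\Jac_{Y_{0\mid0}}G[\Delta]=X_{1\mid0}^{-\top}\Delta X_{1\mid0}^{-1}-\hat X_{1\mid0}^{-\top}\Delta\hat X_{1\mid0}^{-1}.
\]
Applying $\mathrm{vec}(L\Delta N)=(N^{\top}\otimes L)\mathrm{vec}(\Delta)$ with $N=X_{1\mid0}^{-1}$ and $L=X_{1\mid0}^{-\top}$ yields the first Kronecker term in \eqref{PartialJacobian}; the hatted term is handled the same way, which gives \eqref{PartialJacobian}.

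\textbf{Main obstacle.} The delicate part is Step 2 for $H$: getting the sign conventions of the representation $H=-\hat X^{-\top}\hat Y^{\top}$ right, and confirming that $\hat K_t$ is indeed the closed-loop matrix generating $\hat X$. A sign slip there would flip the relative sign between the two Kronecker terms. As a cross-check for the $\Pi$-part, I would differentiate \eqref{Pi1} directly. That gives
\[
\delta\Pi_1=(\Phi^{22}-\Pi_1\Phi^{12})\,\Delta\, X_1^{-1},
\]
and the symplectic identities of \cite[Lemma 3]{chen2018optimal}, namely $\Phi^{11\top}\Phi^{22}-\Phi^{21\top}\Phi^{12}=I$ and the symmetry of $\Phi^{12\top}\Phi^{22}$, confirm that $\Phi^{22}-\Pi_1\Phi^{12}=X_1^{-\top}$.
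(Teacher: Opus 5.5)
Your proof is correct, but it takes a genuinely different route from the paper's.

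\textbf{The paper's route.} The paper works only at $t=1$. It differentiates the closed form \eqref{Pi1}, in which $X_{1\mid0}$ and $Y_{1\mid0}$ are affine in $Y_{0\mid0}$ through the STM blocks. It then uses the symplectic identities of \cite[Lemma 3]{chen2018optimal} to collapse $\Phi^{22}_{0\rightarrow1\mid0}-\Pi_{1\mid0}\Phi^{12}_{0\rightarrow1\mid0}$ to $X_{1\mid0}^{-\top}$. The $H$-part is left as a ``similar computation'' relying on $\hat\Phi_{0\rightarrow1\mid0}=\Phi_{0\rightarrow1\mid0}$. Your cross-check is precisely this argument.

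\textbf{Your route.} You linearize the two Riccati ODEs along the whole trajectory. The variational equations are congruence (Lyapunov-type) flows generated by the closed-loop matrices $K_t$ and $\hat K_t$, whose fundamental matrices are $X_{t\mid0}$ and $\hat X_{t\mid0}$. This gives $\delta\Pi_t=X_{t\mid0}^{-\top}\Delta X_{t\mid0}^{-1}$ and $\delta H_t=\hat X_{t\mid0}^{-\top}\Delta\hat X_{t\mid0}^{-1}$. The signs are right, including $\hat Y_{t\mid0}=-H_{t\mid0}\hat X_{t\mid0}$ and the minus coming from $H_{0\mid0}=\Sigma_0^{-1}-Y_{0\mid0}$, and you land on \eqref{BeforeApplyingVec}.

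\textbf{What your route buys.} It needs no symplectic identities, and it treats the $H$-part as explicitly as the $\Pi$-part. Invertibility of $X_{t\mid0},\hat X_{t\mid0}$ can also be had for free. On any interval where $\Pi_{t\mid0}$ exists, the fundamental matrix of $\dot X=K_tX$ with $X_0=I$, paired with $Y=\Pi X$, solves \eqref{defXY}. Liouville's formula then keeps $\det X_{t\mid0}\neq0$, and the same argument applies to $\hat X_{t\mid0}$. So the formula holds at every $Y_{0\mid0}$ in the domain of $G$, not only near $Y_{0\mid0}^{-}$.

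Your route also extends directly to $\theta\neq0$, where the STM route stalls because $\hat\Phi_{0\rightarrow1\mid\theta}$ depends on $Y_{0\mid\theta}$:
\begin{itemize}
\item one still has $\delta\Pi_s=X_{s\mid\theta}^{-\top}\Delta X_{s\mid\theta}^{-1}$;
\item the coupling term in \eqref{HdotSimplified} adds the source $\theta\,(\delta\Pi_sB_sR_s^{-1}B_s^{\top}\Pi_{s\mid\theta}+\Pi_{s\mid\theta}B_sR_s^{-1}B_s^{\top}\delta\Pi_s)$ to $\tfrac{d}{ds}\delta H_s$;
\item variation of constants with the propagator $\hat X_{1\mid\theta}^{-\top}\hat X_{s\mid\theta}^{\top}(\cdot)\,\hat X_{s\mid\theta}\hat X_{1\mid\theta}^{-1}$ then produces exactly the integral term with $U_{s\mid\theta},V_{s\mid\theta}$ in Remark~\ref{remark:PartialJacobianAtnonzerotheta}.
\end{itemize}

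\textbf{What the paper's route buys.} It stays inside the algebraic STM framework that also produces the closed-form roots \eqref{Y0given0PlusMinus}, which the next proposition exploits.
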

\begin{proof}
From \eqref{defG}, we have 
\begin{align}
\differential G(0,Y_{0\mid 0}) = \differential\Pi_{1\mid 0} + \differential H_{1\mid 0}.
\label{dGEqualsdPiPlusdH}    
\end{align}
Using \eqref{Pi1}, 
\begin{align}
&\differential\Pi_{1\mid 0} = \differential\!\left(Y_{1\mid 0}X_{1\mid 0}^{-1}\!\right)\! = (\differential Y_{1\mid 0}) X_{1\mid 0}^{-1} - Y_{1\mid 0} X_{1\mid 0}^{-1}(\differential X_{1\mid 0}) X_{1\mid 0}^{-1}\nonumber\\
&=(\Phi^{22}_{0\rightarrow 1\mid 0} \differential Y_{0\mid 0}) X_{1\mid 0}^{-1} - \Pi_{1\mid 0} (\Phi^{12}_{0\rightarrow 1\mid 0} \differential Y_{0\mid 0}) X_{1\mid 0}^{-1}\nonumber\\
&= \left(\Phi^{22}_{0\rightarrow 1\mid 0}-\Pi_{1\mid 0}\Phi^{12}_{0\rightarrow 1\mid 0}\right)\left(\differential Y_{0\mid 0}\right)X_{1\mid 0}^{-1}.
\label{differnetialPiIntermediate}
\end{align}
Since $\Pi_{1\mid 0}\in\mathbb{S}^{n}$, we have $\Pi_{1\mid0} = \Pi_{1\mid 0}^\top = X_{1\mid 0}^{-\top} Y_{1\mid 0}^{\top}$. Let $A := \Phi^{22}_{0\rightarrow 1\mid 0}-\Pi_{1\mid 0}\Phi^{12}_{0\rightarrow 1\mid 0} = \Phi^{22}_{0\rightarrow 1\mid 0}- X_{1\mid 0}^{-\top} Y_{1\mid 0}^{\top}\Phi^{12}_{0\rightarrow 1\mid 0}$. Left-multiplying $A$ by $X_{1\mid 0}^\top=\left(\!\Phi_{0\rightarrow 1 \mid 0}^{11}\!\right)^{\!\!\top} \!+\! Y_{0\mid 0} \!\left(\!\Phi_{0\rightarrow 1 \mid 0}^{12}\!\right)^{\!\top}$, we obtain
\begin{align*}
X_{1\mid 0}^{\!\top} A
&= \left(\!\Phi^{11}_{0\rightarrow 1 \mid 0}\!\right)^{\!\top}\!\! \Phi_{0\rightarrow 1 \mid 0}^{22} + Y_{0\mid 0} \left(\!\Phi_{0\rightarrow 1 \mid 0}^{12}\!\right)^{\!\top} \Phi_{0\rightarrow 1 \mid 0}^{22} \nonumber\\
&\quad- \left(\!\Phi_{0\rightarrow 1 \mid 0}^{21}\!\right)^{\!\top} \!\!\Phi_{0\rightarrow 1 \mid 0}^{12} - Y_{0\mid 0} \left(\!\Phi_{0\rightarrow 1 \mid 0}^{22}\!\right)^{\!\top} \Phi_{0\rightarrow 1 \mid 0}^{12} \nonumber \\
&= \left(\!\Phi^{11}_{0\rightarrow 1 \mid 0}\!\right)^{\!\top}\!\! \Phi_{0\rightarrow 1 \mid 0}^{22}- \left(\!\Phi_{0\rightarrow 1 \mid 0}^{21}\!\right)^{\!\top} \!\!\Phi_{0\rightarrow 1 \mid 0}^{12} = I,
\end{align*}
where the second equality is due to the symplectic identity \cite[eq. (7b)]{chen2018optimal}, and the third is due to the symplectic identity \cite[eq. (7a)]{chen2018optimal}. Thus\footnote{This also shows $X_{1\mid 0}$ is nonsingular.} $A = X_{1\mid 0}^{-\top}$, which simplifies \eqref{differnetialPi} to
\begin{align}
\differential\Pi_{1\mid 0} = X_{1\mid 0}^{-\top}\left(\differential Y_{0\mid 0}\right)X_{1\mid 0}^{-1}.
\label{differnetialPi}
\end{align}
Starting from \eqref{H1}, similar computation\footnote{This also shows that $\hat{X}_{1\mid 0}$ is nonsingular.} yields %{\red{note to Abhishek: expand if space permits}}
\begin{align}
\differential H_{1\mid 0} = -\hat{X}_{1\mid 0}^{-\top} \left(\differential Y_{0\mid 0}\right) \hat{X}_{1\mid 0}^{-1}.
\label{differentialH}
\end{align}
Substituting $\differential\Pi_{1\mid 0}$ and $\differential H_{1\mid 0}$ from \eqref{differnetialPi} and \eqref{differentialH} back into \eqref{dGEqualsdPiPlusdH}, we obtain 
\begin{align}
\differential G(0,Y_{0\mid 0}) = X_{1\mid 0}^{-\top}\left(\!\differential Y_{0\mid 0}\!\right)X_{1\mid 0}^{-1} -\hat{X}_{1\mid 0}^{-\top} \!\left(\!\differential Y_{0\mid 0}\!\right)\!\hat{X}_{1\mid 0}^{-1}.
\label{BeforeApplyingVec}
\end{align}
Applying the vectorization (${\mathrm{vec}}$) operator to both sides of \eqref{BeforeApplyingVec}, and using the identity ${\mathrm{vec}}(PQR)=(R^{\top}\otimes P){\mathrm{vec}}(Q)$, we get
\begin{align}
&{\mathrm{vec}}\:\differential G(0,Y_{0\mid 0})\nonumber\\
&= \left[\left(X_{1\mid 0}^{-\top} \otimes X_{1\mid 0}^{-\top}\right) - \left(\hat{X}_{1\mid 0}^{-\top} \otimes \hat{X}_{1\mid 0}^{-\top}\right)\right]{\mathrm{vec}}\:\differential Y_{0\mid 0}.
\label{AfterApplyingVec}
\end{align}
From \eqref{AfterApplyingVec}, using the Jacobian identification rule \cite[p. 199]{magnus2007matrix}, we arrive at \eqref{PartialJacobian}. 
\end{proof}

\begin{remark}\label{remark:PartialJacobianAtnonzerotheta}
A tedious computation shows that at $\theta\neq 0$,
\begin{align}
\Jac_{Y_{0\mid \theta}}G = &X_{1\mid \theta}^{-\top} \otimes X_{1\mid \theta}^{-\top} - \hat{X}_{1\mid \theta}^{-\top} \otimes \hat{X}_{1\mid \theta}^{-\top}\nonumber\\
&+\theta\int_{0}^{1}\left(U_{s\mid\theta}\otimes V_{s\mid\theta}
+
V_{s\mid\theta}\otimes U_{s\mid\theta}\right)\differential s,
\label{PartialJacobianAtthetaNotEqualToZero}    
\end{align}
where
\begin{subequations}
\begin{align}
U_{s\mid\theta} &:=
\hat{X}_{1\mid\theta}^{-\top}\hat{X}_{s\mid\theta}^{\top} X_{s\mid\theta}^{-\top}, \label{defUs}\\
V_{s\mid\theta} &:=\hat{X}_{1\mid\theta}^{-\top}\hat{X}_{s\mid\theta}^{\top}
\Pi_{s\mid\theta} B_s R_s^{-1} B_s^\top X_{s\mid\theta}^{-\top}. \label{defVs}
\end{align}
\label{defUsVs}    
\end{subequations}
While \eqref{PartialJacobianAtthetaNotEqualToZero} generalizes \eqref{PartialJacobian}, we will not use this result in the proofs that follow.
\end{remark}

\begin{proposition}\label{prop:NonsingularityOfPartialJacobianOfGAtThetaEqualToZero}
The partial Jacobian $\Jac_{Y_{0\mid 0}}G$ evaluated at $Y_{0\mid 0}^{-}$ in \eqref{Y0given0PlusMinus}, is nonsingular.
\end{proposition}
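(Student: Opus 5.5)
The plan is to turn the nonsingularity of $\Jac_{Y_{0\mid 0}}G$ into a statement about a Stein (discrete Lyapunov) operator, and then use the explicit root $Y_{0\mid 0}^{-}$ in \eqref{Y0given0PlusMinus} to locate the spectrum of the matrix that defines this operator. By Lemma \ref{lemma:PartialJacobianOfGAtThetaEqualToZero}, $\Jac_{Y_{0\mid 0}}G$ is the matrix of the linear map
\[
\mathcal{L}(S) := X_{1\mid 0}^{-\top} S X_{1\mid 0}^{-1} - \hat{X}_{1\mid 0}^{-\top} S \hat{X}_{1\mid 0}^{-1}.
\]
The same argument works whether $\mathcal{L}$ acts on $\mathbb{S}^n$ or on all of $\mathbb{R}^{n\times n}$.

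Suppose $\mathcal{L}(S)=0$. Multiplying on the left by $\hat{X}_{1\mid 0}^{\top}$ and on the right by $\hat{X}_{1\mid 0}$ gives $S = W^{\top} S W$ with $W := X_{1\mid 0}^{-1}\hat{X}_{1\mid 0}$. In Kronecker form this reads $(I - W^{\top}\otimes W^{\top})\,\mathrm{vec}\,S = 0$. The eigenvalues of $W^{\top}\otimes W^{\top}$ are the products $\lambda_i(W)\lambda_j(W)$. So it suffices to show $\lambda_i(W)\lambda_j(W)\neq 1$ for all $i,j$.

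Next I compute $W$ explicitly at $\theta=0$, where $\hat{\Phi}=\Phi$.
\begin{itemize}
\item From \eqref{XtFromYt} and \eqref{RelatingY0AndY0hat}, $\hat{X}_{1\mid 0} = \Phi^{11}_{0\rightarrow 1} + \Phi^{12}_{0\rightarrow 1}\bigl(Y_{0\mid 0} - \Sigma_0^{-1}\bigr) = X_{1\mid 0} - \Phi^{12}_{0\rightarrow 1}\Sigma_0^{-1}$.
\item Hence $W = I - X_{1\mid 0}^{-1}\Phi^{12}_{0\rightarrow 1}\Sigma_0^{-1}$.
\item Write $S_0 := \frac{1}{4}I + \Sigma_0^{\frac12}(\Phi^{12}_{0\rightarrow 1})^{-1}\Sigma_1(\Phi^{12}_{0\rightarrow 1})^{-\top}\Sigma_0^{\frac12}$. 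Uniform controllability (A1) makes $\Phi^{12}_{0\rightarrow 1}$ invertible, as in \cite{chen2018optimal}, and A3 gives $S_0 \succ \frac14 I$.
\item Substituting $Y_{0\mid 0}^{-}$ from \eqref{Y0given0PlusMinus}, the term $-(\Phi^{12}_{0\rightarrow 1})^{-1}\Phi^{11}_{0\rightarrow 1}$ cancels, leaving
\[
X_{1\mid 0} = \Phi^{12}_{0\rightarrow 1}\,\Sigma_0^{-\frac12}\bigl(\tfrac12 I - S_0^{\frac12}\bigr)\Sigma_0^{-\frac12}.
\]
\item The middle factor is negative definite because $S_0^{\frac12}\succ\frac12 I$, so $X_{1\mid 0}$ is invertible, consistent with the footnote.
\item Therefore $X_{1\mid 0}^{-1}\Phi^{12}_{0\rightarrow 1}\Sigma_0^{-1} = \Sigma_0^{\frac12}\bigl(\tfrac12 I - S_0^{\frac12}\bigr)^{-1}\Sigma_0^{-\frac12}$.
\end{itemize}

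This shows $W$ is similar to $I + D^{-1}$ with $D := S_0^{\frac12} - \frac12 I \succ 0$. Consequently every eigenvalue of $W$ is real and equals $1 + 1/d_i > 1$, where $d_i>0$ are the eigenvalues of $D$. Every product $\lambda_i(W)\lambda_j(W)$ is then strictly greater than $1$, so $I - W^{\top}\otimes W^{\top}$ is nonsingular and $\ker\mathcal{L}=\{0\}$. Applied to the symmetric subspace, this is exactly the nonsingularity of $\Jac_{Y_{0\mid 0}}G$ at $Y_{0\mid 0}^{-}$. As a by-product, $\hat{X}_{1\mid 0} = X_{1\mid 0}W$ is invertible.

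The main obstacle is the algebraic simplification of $X_{1\mid 0}$ at the root $Y_{0\mid 0}^{-}$: the sign and ordering of the $\Sigma_0^{\pm\frac12}$ factors must be tracked carefully so that $W$ comes out congruent or similar to a symmetric matrix with an explicit spectrum. The choice of the minus root is precisely what makes $\frac12 I - S_0^{\frac12}$ definite, and hence what keeps every eigenvalue of $W$ away from modulus $1$.
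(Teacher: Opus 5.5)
Your proof is correct. It shares the paper's backbone: Lemma~\ref{lemma:PartialJacobianOfGAtThetaEqualToZero} plus the explicit evaluation of $X_{1\mid 0}$ and $\hat{X}_{1\mid 0}$ at the minus root. Your middle factor $\Sigma_0^{-\frac12}(\frac12 I - S_0^{\frac12})\Sigma_0^{-\frac12}$ is the paper's $-(\Psi_{01}-\frac12\Sigma_0^{-1})$; note that the paper reserves $S_0$ for $\frac12\Sigma_0^{-1}$, not for your matrix under the square root.

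The closing step is different. The paper factors out $(\Phi^{12}_{0\rightarrow 1})^{-\top}\otimes(\Phi^{12}_{0\rightarrow 1})^{-\top}$. It then shows that the remaining difference of Kronecker squares $\Omega$ is positive definite, by simultaneously diagonalizing $\Psi_{01}$ and $\frac12\Sigma_0^{-1}$ by congruence. You instead reduce the kernel equation to the Stein equation $W^{\top}SW=S$ and read off the spectrum of $W=X_{1\mid 0}^{-1}\hat{X}_{1\mid 0}=(\Psi_{01}-\frac12\Sigma_0^{-1})^{-1}(\Psi_{01}+\frac12\Sigma_0^{-1})$. Your eigenvalues $1+1/d_i$ are exactly $(\psi_i+1)/(\psi_i-1)$ in the paper's notation.

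What each route buys:
\begin{itemize}
\item The paper's route yields a little more: definiteness of $\Omega$, not just invertibility. However, its eigenvalue wording is loose. The $\psi_i$ are generalized eigenvalues of the pencil $(\Psi_{01},\frac12\Sigma_0^{-1})$, not eigenvalues of $\Psi_{01}$. The conclusion $\Omega\succ 0$ really follows because $\Omega$ is congruent via $P\otimes P$ to a positive diagonal matrix (Sylvester's law of inertia).
\item Your similarity $W=\Sigma_0^{\frac12}(I+D^{-1})\Sigma_0^{-\frac12}$ is exact, so no such repair is needed. It also exposes the underlying criterion. Since $\Jac_{Y_{0\mid 0}}G=(X_{1\mid 0}^{-\top}\otimes X_{1\mid 0}^{-\top})(I-W^{-\top}\otimes W^{-\top})$, the Jacobian is nonsingular precisely when $\lambda_i(W)\lambda_j(W)\neq 1$ for all $i,j$.
\end{itemize}
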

\begin{proof}
For convenience, let
\begin{subequations}
\begin{align}
&\Psi_{01}:=\nonumber\\
&\Sigma_{0}^{-\frac{1}{2}}\!\left(\!\frac{1}{4}I + \Sigma_{0}^{\frac{1}{2}}\!\left(\!\Phi^{12}_{0\rightarrow 1}\!\right)^{-1}\Sigma_{1}\left(\!\Phi^{12}_{0\rightarrow 1}\!\right)^{-\top}\Sigma_{0}^{\frac{1}{2}}\!\right)^{\!\frac{1}{2}}\!\Sigma_{0}^{-\frac{1}{2}},
\label{defTheta}\\
&S_0 := \frac{1}{2}\Sigma_0^{-1}\label{defS0}.
\end{align}
\label{defPsiS0}    
\end{subequations}
As explained in Sec. \ref{subsec:OnTheSolution}, $\hat{X}_{0\mid\theta}=X_{0\mid\theta}=I$ $\forall\theta\in[0,1]$, so the matrices $\hat{X}_{1\mid 0}=\Phi^{11}_{0\rightarrow 1\mid 0} + \Phi^{12}_{0\rightarrow 1\mid 0}  \hat{Y}_{0\mid 0}^{-}$ and $X_{1\mid 0}=\Phi^{11}_{0\rightarrow 1\mid 0} + \Phi^{12}_{0\rightarrow 1\mid 0}  Y_{0\mid 0}^{-}$ simplify to
\begin{subequations}
\begin{align}
\hat{X}_{1\mid 0} &= X_{1\mid 0} - \Phi^{12}_{0\rightarrow 1\mid 0}\Sigma_0^{-1} = -\Phi^{12}_{0\rightarrow 1\mid 0}\!\left(\Psi_{01} + S_0\right),\label{SimplifiedX1hat}\\
X_{1\mid 0} &= -\Phi^{12}_{0\rightarrow 1\mid 0}\left(\Psi_{01}-S_0\right),\label{SimplifiedX1}
\end{align}
\label{SimplifiedX1andX1hat}\end{subequations}
where we have used \eqref{RelatingY0AndY0hat} and \eqref{Y0given0PlusMinus}.

Per Assumption A3, both $\Psi_{01},S_0$ in \eqref{defPsiS0} are strictly positive definite. In particular, the positive definiteness of $\Psi_{01}$ follows from that 
\begin{itemize}

\item a positive definite matrix has unique (principal) positive definite square root,

\vspace*{0.05in}

\item $\Phi^{12}_{0\rightarrow 1}$ is invertible \cite[Lemma 3]{chen2018optimal},

\vspace*{0.05in}

\item the matrix $\Sigma_{0}^{\frac{1}{2}}\!\left(\!\Phi^{12}_{0\rightarrow 1}\!\right)^{-1}\Sigma_{1}\left(\!\Phi^{12}_{0\rightarrow 1}\!\right)^{-\top}\Sigma_{0}^{\frac{1}{2}}\succ 0$ since it is congruent \cite[Ch. 4.5]{horn2012matrix} to $\Sigma_1\succ 0$ via congruence transform defined by invertible matrix $\Sigma_{0}^{\frac{1}{2}}\!\left(\!\Phi^{12}_{0\rightarrow 1}\right)^{\!-1}$.  
\end{itemize}
Hence, $\Psi_{01} + S_0 \succ 0$. Furthermore, 
$$\lambda_i\!\left(\!\frac{1}{4}I + \Sigma_{0}^{\frac{1}{2}}\!\left(\!\Phi^{12}_{0\rightarrow 1}\!\right)^{-1}\Sigma_{1}\left(\!\Phi^{12}_{0\rightarrow 1}\!\right)^{-\top}\Sigma_{0}^{\frac{1}{2}}\!\right)\!>\frac{1}{4}\quad\forall i\in\llbracket n\rrbracket$$
implies that
\begin{align}
&\Psi_{01}-S_0=\nonumber\\
&\Sigma_0^{-\frac{1}{2}}\!\!\left(\!\!\left(\!\frac{1}{4}I + \Sigma_{0}^{\frac{1}{2}}\!\!\left(\!\Phi^{12}_{0\rightarrow 1}\!\right)^{-1}\Sigma_{1}\left(\!\Phi^{12}_{0\rightarrow 1}\!\right)^{-\top}\Sigma_{0}^{\frac{1}{2}}\!\right)^{\!\frac{1}{2}} \!\!- \frac{1}{2}I\!\right)\!\Sigma_0^{-\frac{1}{2}}\nonumber\\
&\succ 0.\label{PsiMinusSPosDef}
\end{align}
From \eqref{SimplifiedX1andX1hat}, taking the inverse transposes, we get
\begin{subequations}
\begin{align}
\hat{X}_{1\mid 0}^{-\top} &= -\left(\Phi^{12}_{0\rightarrow 1\mid 0}\right)^{-\top}\!\left(\Psi_{01} + S_0\right)^{-1},\label{InverseTransposeX1hat}\\
X_{1\mid 0}^{-\top} &= -\left(\Phi^{12}_{0\rightarrow 1\mid 0}\right)^{-\top}\!\left(\Psi_{01}-S_0\right)^{-1}.\label{InverseTransposeX1}
\end{align}
\label{InverseTransposeOfX1andX1hat}\end{subequations}
Substituting \eqref{InverseTransposeOfX1andX1hat} in \eqref{PartialJacobian} from Lemma \ref{lemma:PartialJacobianOfGAtThetaEqualToZero}, and using Kronecker product properties, we obtain
\begin{align}
\Jac_{Y_{0\mid 0}}G\bigg\vert_{Y_{0\mid 0} = Y_{0\mid 0}^{-}} = \!\left(\!\left(\Phi^{12}_{0\rightarrow 1}\right)^{-\top}\!\otimes\!\left(\Phi^{12}_{0\rightarrow 1}\right)^{-\top}\right)\Omega,
\label{JacobianOfGAtthetaEqualsZero}
\end{align}
where
\begin{align}
\Omega:=&\left((\Psi_{01} - S_0)^{-1} \otimes (\Psi_{01} - S_0)^{-1}\right) \nonumber\\
&- \left((\Psi_{01} + S_0)^{-1} \otimes (\Psi_{01} + S_0)^{-1} \right).
\label{defOmega}    
\end{align}
Since $\Phi^{12}_{0\rightarrow 1}$ is invertible, \eqref{JacobianOfGAtthetaEqualsZero} is nonsingular if and only if $\det\:\Omega \neq 0$.

Since $S_0 \succ 0$ and $\Psi_{01}$ are real symmetric, by the simultaneous diagonalizability theorem \cite[Theorem 7.6.4(a)]{horn2012matrix}, \cite[Theorem 20.1]{prasolov1994problems}, there exists nonsingular $P$ such that
\begin{align}
P^\top S_0 P = I, \quad P^\top \Psi_{01} P= \operatorname{diag}(\psi_1, \dots, \psi_n),
\label{SimulatenousDiagnoalizability}    
\end{align}
where $\psi_i>0$ $\forall i\in\llbracket n\rrbracket$ denote the $i$th eigenvalue of $\Psi_{01}\succ 0$.

Since $\Psi_{01} - S_0 \succ 0$ (see \eqref{PsiMinusSPosDef}), the simultaneous diagonalizability \eqref{SimulatenousDiagnoalizability} implies the stronger: $\psi_i > 1$ $\forall i\in\{1,\hdots,n\}$. Then the eigenvalues of $\left(\Psi_{01} - S_0\right)^{-1}$ and $\left(\Psi_{01} + S_0\right)^{-1}$ are $\frac{1}{\psi_i - 1}$ and $\frac{1}{\psi_i + 1}$, respectively. So the spectrum of $\Omega$ in \eqref{defOmega} comprises of
\begin{align*}
&\frac{1}{(\psi_i - 1)(\psi_j - 1)} - \frac{1}{(\psi_i + 1)(\psi_j + 1)}\nonumber\\
=&\frac{2(\psi_i + \psi_j)}{(\psi_i^2 - 1)(\psi_j^2 - 1)} \qquad\qquad\qquad \forall (i,j) \in \llbracket n \rrbracket \times\llbracket n \rrbracket,
\end{align*}
which are positive because $\psi_i > 1$ $\forall i\in\llbracket n\rrbracket$. This proves $\Omega\succ 0$, and in particular $\det\:\Omega\neq 0$. Hence \eqref{JacobianOfGAtthetaEqualsZero} is nonsingular.
\end{proof}

\begin{proof}[Proof of Theorem 1]
Specializing Proposition \ref{prop:G_C1} at $Y_{0\mid 0} = Y_{0\mid 0}^{-}$, the mapping $G$ is $\mathcal{C}^{1}$ in a neighborhood of $(0,Y_{0\mid 0}^{-})$. Moreover, since $Y_{0\mid 0}^{-}$ is the unique admissible risk-neutral solution, from \eqref{defG}, we have that $G(0,Y_{0\mid 0}^{-})=0$. 

By Proposition \ref{prop:NonsingularityOfPartialJacobianOfGAtThetaEqualToZero}, the partial Jacobian $\Jac_{Y_{0\mid 0}}G$
evaluated at $Y_{0\mid 0}^{-}$, is nonsingular. Hence, the implicit function theorem \cite[Theorem 3.4]{edwards1994advanced} applies at $\left(0,Y_{0\mid 0}^{-}\right)$. Therefore, there exist an open interval $\mathcal{I}\subset\mathbb{R}$ containing $\theta=0$, a neighborhood $\mathcal{U}\subset\mathbb{S}_n$ containing $Y_{0\mid 0}$, and a unique $\mathcal{C}^1$ mapping $h:\mathcal{I}\rightarrow\mathcal{U}$ such that $h(0)=Y_{0\mid 0}^{-}$ and $G(\theta,h(\theta))=0$ $\forall\theta\in\mathcal{I}$. This completes the proof.
\end{proof}
%%%%%%%%%%%%%%%%%%%%%%%%%%%%%%%%%%

\section{Numerical Example}\label{sec:Numerics}

As an illustrative example, we implemented the solution of the LEQG covariance steering over a noisy double integrator, i.e., an instance of problem \eqref{LEQGcovariancesteeringproblem} with $$(A_t,B_t)\!\equiv\!\left(\!\begin{bmatrix}
0 & 1\\
0 & 0
\end{bmatrix}\!\!,\!\!\begin{bmatrix}
0\\
1\end{bmatrix}
\!\right)\!, B_t \equiv F_t, Q_t=R_t=I \: \forall t\in[0,1],$$
and randomly generated endpoint covariances
$$\Sigma_0=
    \!\begin{bmatrix}
        3.3356 & -0.0796\\
        -0.0796 & 2.0147
    \end{bmatrix}\!,\quad\Sigma_1=\!\begin{bmatrix}
        6.9230 & -2.0741\\
        -2.0741 & 3.7050
    \end{bmatrix}\!.$$
These problem data satisfy assumptions A1-A3.

With these problem data, we used Newton's method with backtracking line search to solve the implicit nonlinear equation \eqref{defG} in unknown $Y_{0\mid\theta}$. Starting with $Y_{0\mid0}^{-}$ in \eqref{Y0given0PlusMinus} as an initial guess for $Y_{0\mid\theta}$, we solved the IVP defined by \eqref{PidotSimplified} and $\Pi_{0\mid\theta}=Y_{0\mid\theta}$, followed by the IVP defined by \eqref{HdotSimplified} and $H_{0\mid\theta}=\Sigma_{0}^{-1}-\Pi_{0\mid\theta}$. Due to the one-way coupling among \eqref{HdotSimplified}-\eqref{PidotSimplified}, the latter IVP solution uses the trajectory $\Pi_{t\mid\theta}$ from the former IVP. For the Newton update, we used the Jacobian \eqref{PartialJacobianAtthetaNotEqualToZero}. Fig. \ref{Fig:covariance_funnel_3d} shows the steering of the optimally controlled state covariance $\Sigma_{t\mid\theta}$ for $\theta=-1,0,1$.
\begin{figure}
\centering
\includegraphics[width=0.88\linewidth]{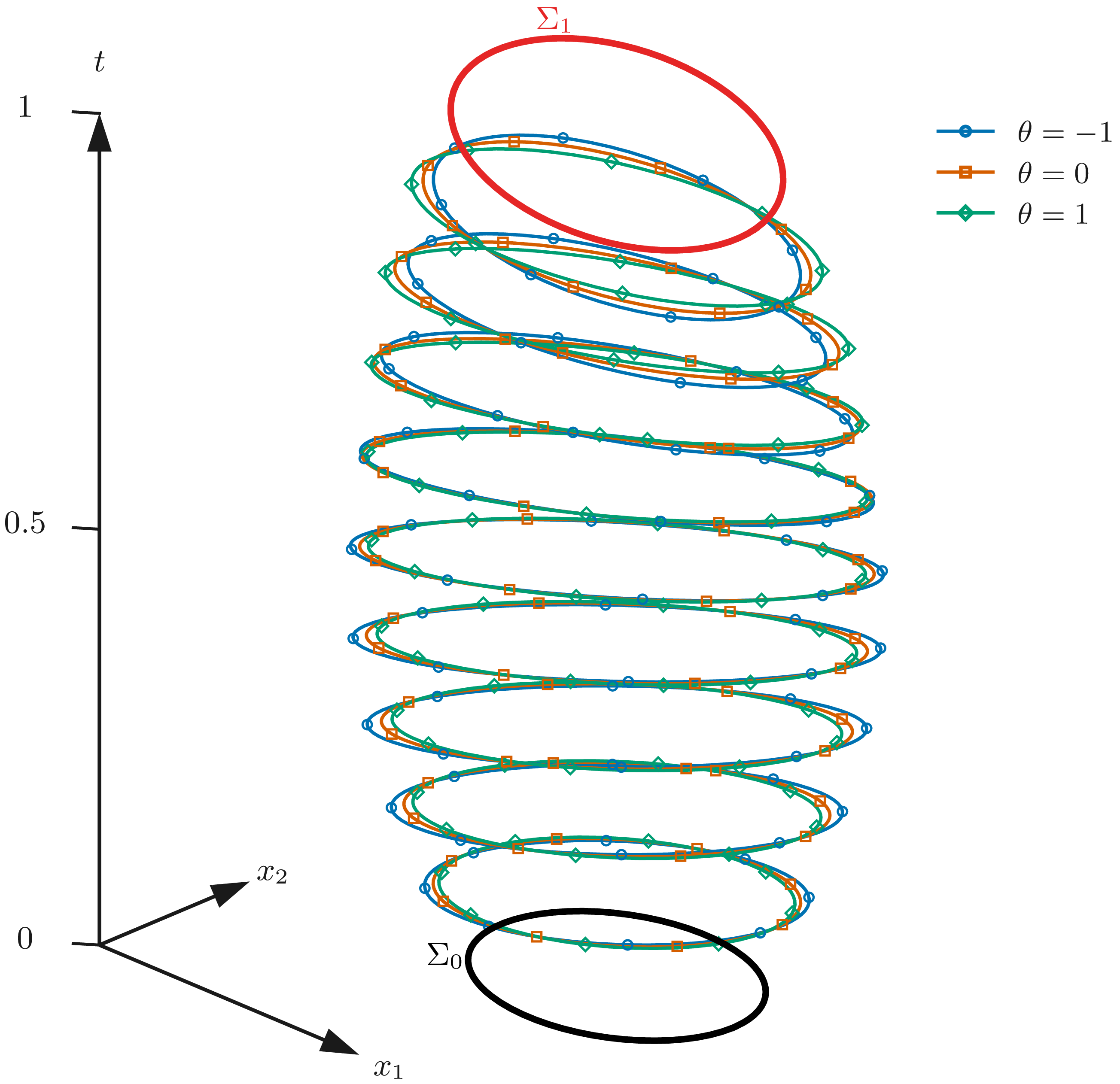}
\caption{{\small{LEQG steering of the state covariance $\Sigma_{t\mid\theta}$, shown as respective $2\sigma$ ellipses for $t\in[0,1]$, $\theta\in\{-1,0,1\}$, and state domain $[-6,6]\times[-4,4]$. All ellipses are centered at the origin.}}}
\label{Fig:covariance_funnel_3d}
\vspace*{-0.15in}
\end{figure}

As $\theta$ increases from negative to positive, the optimal solution transitions from risk-seeking to risk-averse. Consequently, $\log\det\Sigma_{t\mid\theta}$ for any fixed $t$, is a decreasing function of $\theta$. Fig. \ref{Fig:theta_versus_logdetSigma} shows this hedging against uncertainty for the optimal solution as $\theta$ increases from $-1$ to $+1$ with step-size $0.1$. 
\begin{figure}
\centering
\includegraphics[width=0.85\linewidth]{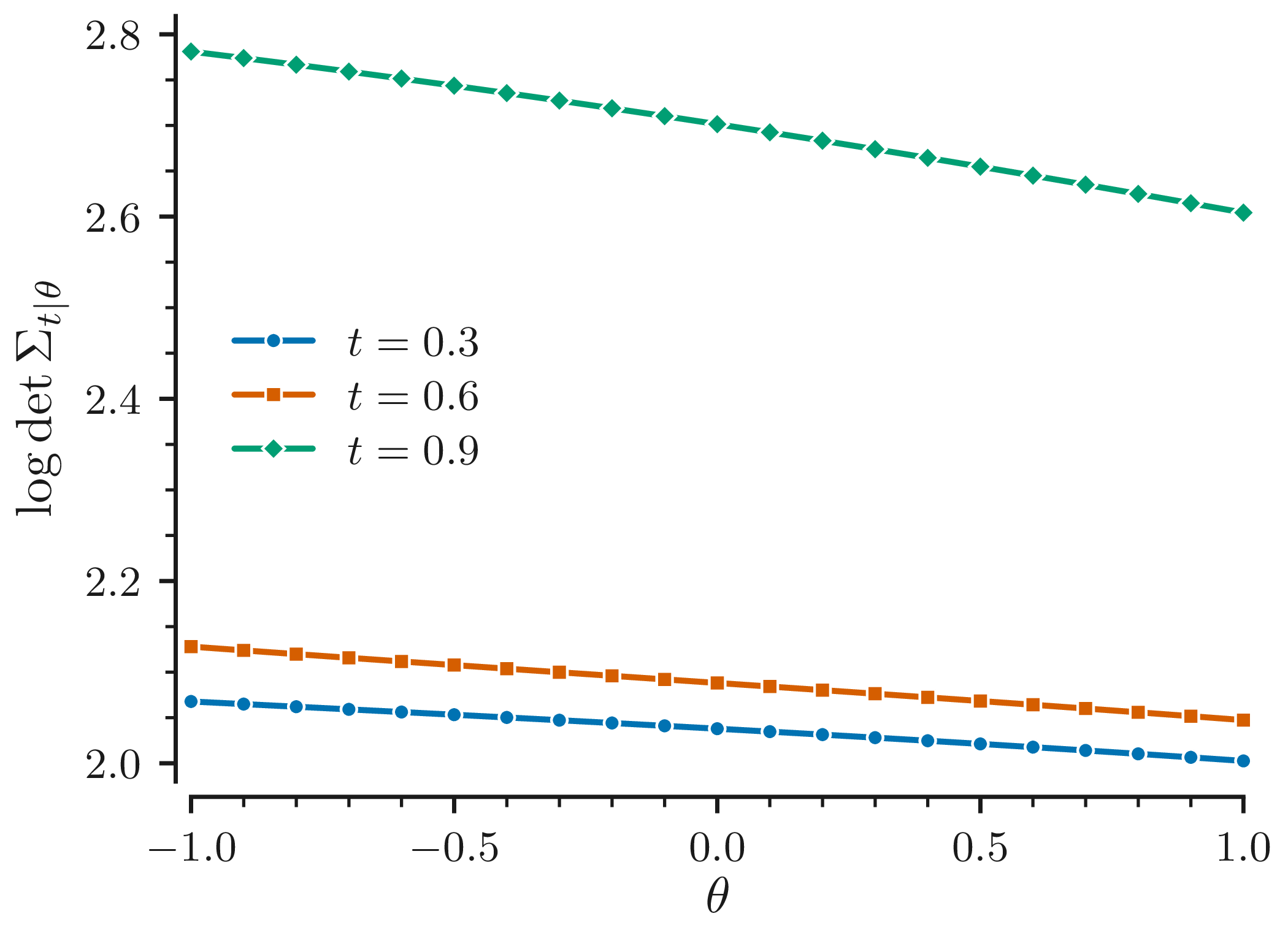}
\vspace*{-0.1in}
\caption{{\small{Risk-sensitivity parameter ($\theta$) versus the log-determinant of the optimally controlled state covariance for $t=0.3,0.6,0.9$.}}}
\label{Fig:theta_versus_logdetSigma}
\vspace*{-0.15in}
\end{figure}
Fig. \ref{Fig:theta_versus_OptimalCost} also captures the same in that the optimal cost is shown to be increasing w.r.t. $\theta$. Notice in particular that beyond the apparent scaling, the optimal $\frac{1}{\theta} \log \mathbb{E}[e^{\theta C}]$ depends on $\theta$ because the random variable $C$ in \eqref{defIQC} depends on $\Pi_{t\mid\theta}$ via \eqref{OptimalControl}, and also because the $\mathbb{E}\left[\cdot\right]$ therein is taken w.r.t. the controlled joint state distribution $\mathcal{N}\left(0,\Sigma_{t\mid\theta}\right)$.
\begin{figure}
\centering
\includegraphics[width=0.85\linewidth]{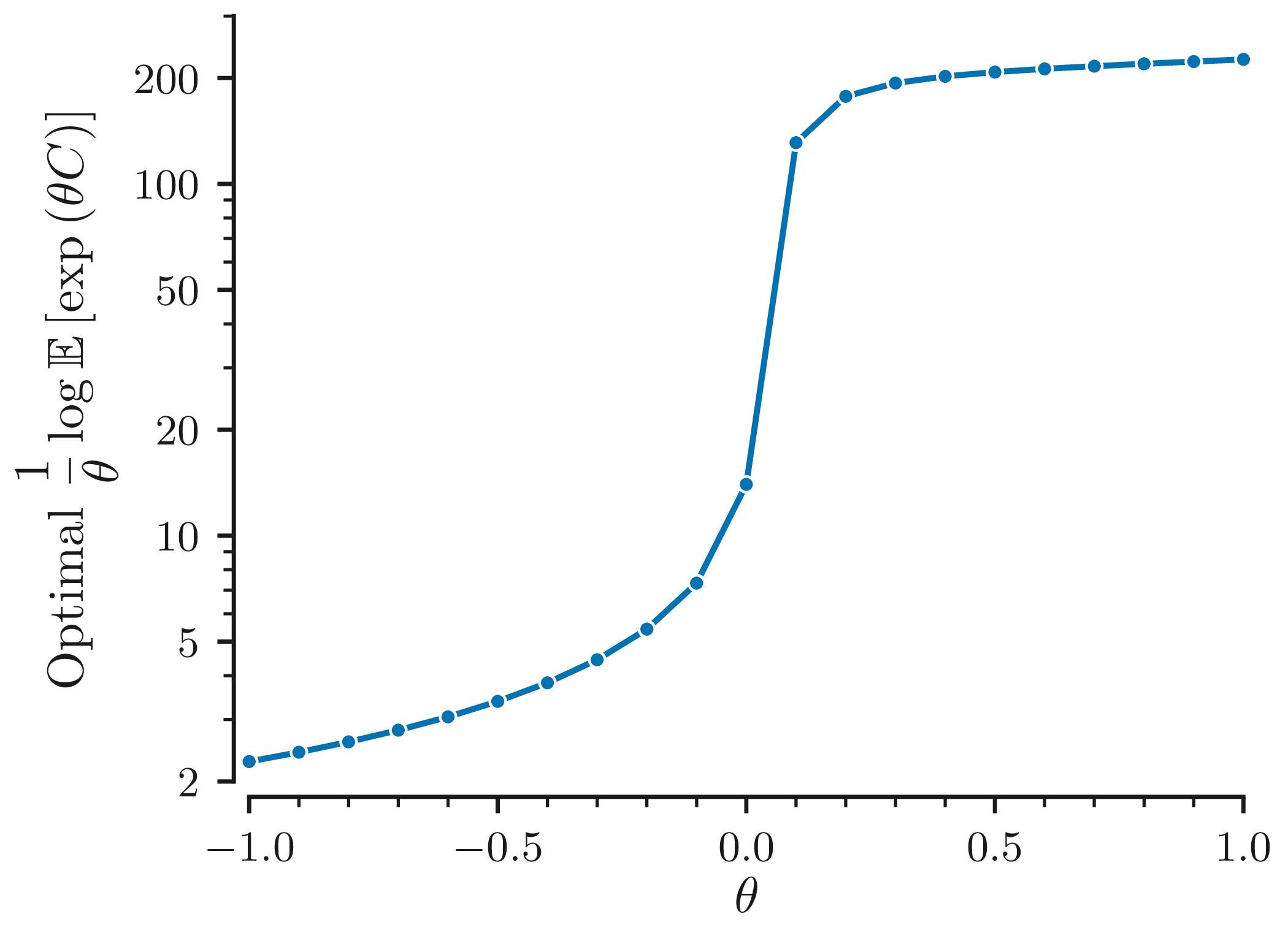}
\vspace*{-0.1in}
\caption{{\small{Risk-sensitivity parameter ($\theta$) versus the optimal cost \eqref{LEQGobjective}.}}}
\label{Fig:theta_versus_OptimalCost}
\vspace*{-0.15in}
\end{figure}

%%%%%%%%

% A fixed seed of $8642013579$ is used so that the covariance boundary conditions and multistart initializations are reproducible.
%%%%%%%%%%%%%%%%%%%%%%%%%%%%%%%%%%

\section{Concluding Remarks}\label{sec:Conclusions}
The purpose of this letter was to introduce the LEQG covariance steering problem and to explain how its solution generalizes the results known in the literature, emphasizing new structural aspects of the solution. While Theorem \ref{thm:LocalExistenceUniqueness} proved \emph{local} existence-uniqueness of solution, we suspect this result can be made \emph{global} by upgrading \eqref{PartialJacobian} with \eqref{PartialJacobianAtthetaNotEqualToZero}, and then showing that $h$ can be extended in $\mathcal{C}^{1}$ to the boundary of any finite interval. This remains ongoing work. All results and writing (including the em dashes in the abstract) are due to the three human authors, not by AI.  

%%%%%%%%%%%%%%%%%%%%%%%%%%%%%%%%%%

% \begin{figure*}
% \includegraphics[width=\linewidth]{example-image-c}
% \caption{Effect of changing $c(x,y)$.}
% \label{TBD}
% \end{figure*}

%%%%%%%%%%%%%%%%%%%%%%%%%%%%%%%%%%

% \section{Conclusions}\label{sec:Conclusions}
% {\red{TBD}}

%%%%%%%%%%%%%%%%%%%%%%%%%%%%%%%%%%%%%%%%%%%%%%%%%%%%%%%%%%%%%%%%%%%%%%%%%%%%%%%%%%%%%%%%%

\bibliographystyle{IEEEtran}
\bibliography{references.bib}

\end{document}